\journalname{Computers \& Mathematics with Applications}
\newtheorem{theorem}{Theorem}
\newtheorem{remark}[theorem]{Remark}
\begin{document}
\begin{frontmatter}

%% Title, authors and addresses

%% use the tnoteref command within \title for footnotes;
%% use the tnotetext command for the associated footnote;
%% use the fnref command within \author or \address for footnotes;
%% use the fntext command for the associated footnote;
%% use the corref command within \author for corresponding author footnotes;
%% use the cortext command for the associated footnote;
%% use the ead command for the email address,
%% and the form \ead[url] for the home page:
%%
%% \title{Title\tnoteref{label1}}
%% \tnotetext[label1]{}
%% \author{Name\corref{cor1}\fnref{label2}}
%% \ead{email address}
%% \ead[url]{home page}
%% \fntext[label2]{}
%% \cortext[cor1]{}
%% \address{Address\fnref{label3}}
%% \fntext[label3]{}

\dochead{}
%% Use \dochead if there is an article header, e.g. \dochead{Short communication}
%% \dochead can also be used to include a conference title, if directed by the editors
%% e.g. \dochead{17th International Conference on Dynamical Processes in Excited States of Solids}

\title{On high-order conservative finite element methods}

%% use optional labels to link authors explicitly to addresses:
%% \author[label1,label2]{<author name>}
%% \address[label1]{<address>}
%% \address[label2]{<address>}

\author[EA]{Eduardo Abreu}
\author[EA]{Ciro D\'iaz}
\author[JG]{Juan Galvis}
\author[MS]{Marcus Sarkis}

\address[EA]{University of Campinas, Department of Applied Mathematics,
13.083-970, Campinas, SP, Brazil; eabreu@ime.unicamp.br}

%\address[JP]{ITM-University Institution,  Calle 73 No 76A - 354 V\'{i}a al Volador,
%Medell\'{i}n - Colombia | Tel: (574) 440 51 00 Fax: (574) 440 51 03; jhonperez@itm.edu.co}

\address[JG]{Departamento de Matem\'{a}ticas, Universidad Nacional de 
Colombia, Bogot\'{a} D.C., Colombia.}

\address[MS]{Department of Mathematical Sciences, Worcester Polytechnic 
Institute Worcester USA.}

\begin{abstract}
We describe and analyze a volumetric and residual-based Lagrange multipliers saddle
point reformulation of the standard high-order finite method, to impose conservation
of mass constraints for simulating the pressure equation on two dimensional convex
polygons, with sufficiently smooth solution and mobility phase. We establish high-order 
a priori error estimates with locally conservative fluxes and
numerical results are presented that confirm the theoretical results. 
\end{abstract}

\begin{keyword}
Conservative High-order FEM \sep Darcy flow \sep Porous media 
\sep high contrast heterogeneity \sep Elliptic-Poisson problem 

 \PACS 47.11.Df \sep 47.40.Nm \sep 47.56.+r
 \MSC 76S05 \sep 76M10 \sep 76M20
\end{keyword}

\end{frontmatter}

%%
%% Start line numbering here if you want
%%
 %\linenumbers % ADICONAR LINHAS pacote \usepackage{lineno} acima!!

\section{Problem}

Many porous media related practical problems lead to the 
numerical approximation of the pressure equation
\begin{eqnarray}\label{eq:problem1}
-\mbox{div}(\Lambda(x) \nabla p) = q \quad 
\mbox{in}~~\Omega \subset \Re^{2}, \\
p = 0 \quad \text{on}~~\partial \Omega_D, \\
\nabla p \cdot \mathbf{n} = 0  \quad \text{on}~~\partial \Omega \backslash \partial \Omega_D,
\end{eqnarray}
where $\partial \Omega_D$ is the part of the boundary
of the domain $\Omega$ (denoted by $\partial \Omega$) where the
Dirichlet boundary condition is imposed. In case the measure of
$\partial \Omega_D$ (denoted by $|\partial \Omega_D|$)
is zero, we assume the compatibility condition $\int_\Omega q \,dx= 0$.  On the above equation we have assumed 
without loss of generality homogeneous boundary conditions since we 
can always reduce the problem to that case.  The domain $\Omega$ is assumed to be a convex polygonal region in order at least $H^2$ regularity, see \cite{MR0775683}, and for a rectangle domain the problem is $H^p$ regular
for any integer $p$. We note however that this convexity or rectangularity 
are not required for the discretization, they are    
required only when regularity theory of partial differential equations (PDEs) 
is considered for establishing the a priori error estimates.

In multi-phase  immiscible 
incompressible flow, $p$ and $\Lambda$ are the unknown 
pressure and the given phase mobitity of one of the phases in 
consideration {(water, oil or gas); ( see e.g., 
\cite{PB14,BACC13,DFPY97,JDBR16,EA2014,ADFMP06}).} In general, the 
forcing term $q$ is due to gravity, phase transitions, sources and 
sinks, or when we transform a nonhomogeneous boundary condition problem
to a homogeneous one. The mobility phase in consideration is defined by   
$\Lambda(x) =  K(x) k_r(S(x))/\mu$, where $K(x)$ is 
the absolute (intrinsic) permeability of the porous media, $k_r$ is the relative 
phase permeability and $\mu$ the phase viscosity of the 
fluid. The assumptions required in this numerical analysis article may not in general hold for
such large-scale flow models. 

The main goal of our work is to obtain conservative solution  
of the equations above when they are discretized by high 
order continuous piecewise polynomial spaces. The obtained 
solution satisfies some given set of linear restrictions 
(may be related to subdomains of interest). Our motivations 
come from the fact that in some applications it is 
\underline{\bf imperative} to have some conservative 
properties represented as conservations of total flux in 
control volumes. For instance, if $\mathbf{q}^h$ represents the 
approximation to the flux (in our case 
$\mathbf{q}^h=-\Lambda \nabla p^h$ where $p^h$ is the 
approximation of the pressure), it is required that
\begin{equation} \label{conserv}
\int_{\partial V} \mathbf{q}^h \cdot 
\mathbf{n} = \int_{V} q \quad 
\text{for each control volume} ~~ V.
\end{equation}
Here $V$ is a control volume that does not cross 
$\partial \Omega_D$ from a set of controls volumes of 
interest, and here and after $\mathbf{n}$ is the normal 
vector pointing out the control volume in consideration. 
If some appropriate version of the total  flux restriction 
written above holds, the method that produces such an 
approximation is said to be a conservative discretization.

Several schemes offer conservative discrete solutions. 
These schemes depend on the formulation to be approximated 
numerically. Among the conservative discretizations for the 
second the order formulation the elliptic problem we mention
the finite volume (FV) method, some finite difference methods 
and some discontinuous Galerkin methods. On the other hand, 
for the first order formulation or the Darcy system we have 
the mixed finite element methods and some hybridizable 
discontinuous Galerkin (HDG) methods.
 
In this paper, we consider methods that discretize the second 
order formulation \eqref{eq:problem1}. Working with the 
second order formulation makes sense especially for cases where 
some form of high regularity holds. Usually in these cases 
the equality in the second order formulation is an equality 
in $L^2$ so that, in principle, there will be no need to 
weaken the equality by introducing less regular spaces for 
the pressure as it is done in mixed formulation with $L^2$ 
pressure. 

For second order elliptic problems, a very popular conservative discretization 
is the finite volume (FV) method. The classical FV discretization  provides and 
approximation of the solution in the space of piecewise linear 
functions with respect to a triangulation while satisfying 
conservation of mass on elements of a dual triangulation. 
When the approximation of the piecewise linear space is not 
enough for the problem at hand, advance approximation spaces 
need to be used (e.g., for problems with smooth solutions some 
high order approximation may be of interest). However, in 
some cases, this requires a sacrifice of the conservation 
properties of the FV method. Here in this paper, we  design and 
analyze conservative solution in spaces of high order 
piecewise polynomials. We follow the methodology  in \cite{MR3430146}, 
that imposes the total flux restrictions by employing Lagrange 
multiplier technique. This methodology was developed in order to apply the higher-order methods constructed in \cite{egh12,jcp,ge09_1,ge09_1reduceddim} to two-phase flow problems.

We note that FV methods that use higher degree piecewise 
polynomials have been introduced in the literature. The 
fact that the dimension of the approximation spaces is 
larger than the number of restrictions led the researchers 
to design some method to select solutions: For instance, 
in \cite{chen,chen2,chen3} to introduce additional control 
volumes to match the number of restrictions to the number 
of unknowns. It is also possible to consider a Petrov-Galerkin 
formulation with additional test functions rather that only 
piecewise constant functions on the dual grid. Other approaches.
 have been also introduced, see for instance 
\cite{CJimsfv} and references therein.

In the construction of new methodologies into 
a reservoir simulation should have into account the following 
issues: 1) local mass conservation properties, 2) stable-fast 
solver and 3) the flexibility of re-use of the novel technique 
into more complex models (such as to nonlinear time-dependent 
transport equations equation for the convection dominated 
transport equation). For Darcy-like model problems with 
very high contrasts in heterogeneity, the discretization 
of Darcy-like models alone may be very hard to solve 
numerically due to a large condition number of the arising 
stiffness matrix. Moreover, the situation in even more 
intricate for modeling non trivial two- \cite{LD93,MGJD05} 
and three-phase \cite{ADFMP06,EA2014} transport convection 
dominated phenomena problems for flow through porous media 
(see also other relevant works \cite{GYF15,PB14,JDBR16,JBAC10}). 
Thus, to achieve a sufficiently coupling between the volume 
fractions (or saturation) and the pressure-velocity, the full 
problem can be treated along with a fractional-step numerical 
procedure \cite{ADFMP06,EA2014}; we point out that we are 
aware about the very delicate issues linked to the discontinuous 
capillary-pressure (see \cite{BACC13} and the references therein).
Indeed, the fluxes (Darcy velocities) are smooth at the vertices 
of the cell defining the integration volume in the dual 
triangularization, since these vertices are located at the 
centers of non-staggered cells, away from the jump discontinuities 
along the edges. This facilitates the construction of second-order 
and high-order approximations linked to the hyperbolic-parabolic 
model problem \cite{ADFMP06,EA2014}. This gives some of the 
benefits of staggering between primal and dual mesh triangulation 
by combining our novel high-order conservative finite element 
method with finite volume for hyperbolic-parabolic conservation 
laws modeling fluid flow in porous media applications.

Here in this paper, we consider a  Ritz formulation and construct 
a solution procedure that combines a continuous Galerkin-type 
formulation that concurrently satisfies mass conservation 
restrictions. We impose finite volume restrictions by using 
a scalar Lagrange multiplier for each restriction. This is 
equivalently to a constraint minimization problem where we
minimize the energy functional of the equation restricted 
to the subspace of functions that satisfy the conservation 
of mass restrictions. Then, in the Ritz sense, the obtained  
 solution is the best among all functions that satisfy 
the mass conservation restriction.

Another advantage of our formulation is that the analysis 
can be carried out with classical tools for analyzing 
approximations to  saddle point problems 
\cite{MR2168342}. We analyze the method using an abstract 
framework and give an example for the case of second order 
piecewise polynomials. An important finding of these paper 
is that we were able to obtain optimal error estimates in 
the $H^1$ norm as well as the $L^2$ norm. Our $L^2$
error analysis requires additional assumptions, including specially collocated
dual meshes and $\Lambda = I$, and is obtained by adding the Lagrange multipliers to the
approximation $p_h$ by an Aubin-Nitsche trick  
\cite{MR2373954,MR2322235}.
%%%% VOLTAR

The rest of the paper is organized as follows. In Section 
\ref{sec:con} we present the Lagrange multipliers formulation 
of our problem. In Section \ref{sec:dis} we introduce the 
saddle point approximation for which the analysis
is presented in Section \ref{sec:analysis}. In Section 
\ref{sec:smooth} we present the particular cases of 
high-order continuous finite element spaces. For this last 
case we present some numerical experiments in Section 
\ref{sec:num}. To close the paper we present some conclusions 
in Section \ref{sec:conclusions}.
 
\section{Lagrange multipliers and conservation of mass}\label{sec:con}

Denote  $H_{D}^1(\Omega)$ as the subspace of functions in $H^1(\Omega)$ 
which vanish
on $\partial \Omega_D$. In case $|\partial \Omega_D| = 0$, 
$H_{D}^1(\Omega)$ is the subspace of functions in $H^1(\Omega)$ with 
zero average on $\Omega$.
The variational formulation of problem  (\ref{eq:problem1}) is to find 
$p\in H^1_D(\Omega)$   such that 
\begin{equation}\label{eq:problem}
a(p,v)=F(v)   \quad \mbox{ for all } v\in H_D^1(\Omega),
\end{equation}
where the bilinear form $a$ is 
defined by
\begin{equation}\label{eq:def:a}
a(p,v)=\int_\Omega 
 \Lambda (x)\nabla p(x)\cdot \nabla v(x) dx, 
\end{equation}
and the functional $F$ is defined by 
\begin{equation}
{F(v)=\int_\Omega q(x)v(x)dx.  }
\end{equation}

In order to consider a general formulation for porous media applications
we let 
$\Lambda$ be a $2\times2$ matrix with entries in  $L^\infty   (\Omega)$
 in Problem (\ref{eq:problem}) to be almost everywhere symmetric positive definite 
matrix with eigenvalues bounded uniformly from below by
a positive constant, however, in certain 
parts of the paper when analysis and regularity theory are required, we assume 
$\Lambda(x)=I (identity)$.  The Problem (\ref{eq:problem}) is 
equivalent to the minimization problem: Find $p \in H_D^1$ and such that 
\begin{equation}\label{eq:unconstraint}
p  =\arg\min_{v \in H_D^1(\Omega) } \mathcal{J}(v),
\end{equation}
where
\begin{equation}\label{eq:def:J}
\mathcal{J}(v)=\frac{1}{2}a(v,v)-F(v).\end{equation} 

In order to deal with mass conservation properties we 
adopt the strategy introduced in \cite{MR3430146}. Let us
introduce the meshes we are going to use in our discrete 
problem. Let the primal triangulation ${\mathcal{T}}_h =\{  R_\ell \}_{\ell=1}^{N_h}$ be made of 
elements that are triangles or squares and let $N_h$ be  
the number of elements of this triangulation. We also 
have a dual mesh ${\mathcal{T}}_h^*=\{V_k \}_{k=1}^{N^*_h}$ 
where the elements are called control volumes, and  
 $N_h^*$ is the number of control volumes. Figure 
\ref{figure1} illustrates a primal and dual mesh made 
of squares when $\partial \Omega_D = \partial \Omega$, and 
in this case $N^*_h$ is equal to the number of interior
vertices of the primal triangulation. In general it is selected one control 
volume $V_k$ per vertex of the primal triangulation when
the measure $|V_k \cap \partial \Omega_D| = 0$.  In case 
$|\partial \Omega_D| = 0$, $N_h^*$ is the total number of vertices
of the  primal triangulation including the  vertices on $\partial \Omega$. 

In order to ensure the mass conservation, we impose it 
as a restriction (by using Lagrange multipliers) in 
each control volume $\{ V_{k} \}_{k=1}^{N^*_h}$.  We 
mention that our formulation allows for a more general 
case where only few control volumes, not necessarily related to 
the primal triangulation, are selected.

Let us define the linear functional 
$\tau_k(v) = \int_{\partial V_k} - \Lambda  \nabla v\cdot\mathbf{n}\,ds$,
$1 \leq k \leq N^*_h$. We first note $\tau_k(v)$ is not well defined
for $v \in H^1_D(\Omega)$. To fix that, recall that $q\in L^2(\Omega)$, 
therefore, let us define the Hilbert space
\[
{H^1_{div,\Lambda}(\Omega)} = \{ v: v \in H_D^1(\Omega) \mbox{ and }
\Lambda \nabla v \in 
\mbox{H}(\mbox{div},\Omega)\}
\]
with norm
$\|v\|_{H^1_{div,\Lambda}(\Omega)}^2 =
\|\Lambda \nabla v \cdot \nabla v\|_{L^2(\Omega)}^2 + \|\mbox{div}(\Lambda \nabla v)\|^2_{L^2(\Omega)}$
where the divergence is taken in the weak sense. We note
that this space and norm are well-defined with the properties of $\Lambda$
described above, that is, the smaller eigenvalue of $\Lambda(x)$ is
uniformly bounded from below by a positive number, by using
similar arguments given in ~\cite{MR0431752}*{Theorem 1}.
It is easy to see by using integration by parts with the function $z=1$
that $\tau_k$ is a continuous linear functional on ${H^1_{div,\Lambda}(\Omega)}$.
The integration by parts can be performed since 
$\|\Lambda \nabla v \cdot \nabla v\|_{L^2(V_k)}^2 + \|\mbox{div}(\Lambda \nabla v)\|^2_{L^2(V_k)}$ is well-defined and bounded by
$\|\Lambda \nabla v \cdot \nabla v\|_{L^2(\Omega)}^2 + \|\mbox{div}(\Lambda \nabla v)\|^2_{L^2(\Omega)}$. 

   Let $p$ be the solution of \eqref{eq:problem} and define
$m_k=\tau_k(p) = \int_{V_k} q\,ds$, $1\leq k\leq N_h^*$.
The problem (\ref{eq:unconstraint}) is also equivalent to: 
Find $p\in {H^1_{div,\Lambda}(\Omega)}$ such that 
\begin{equation}\label{eq:problem-with-restriction}
p =\arg\min_{v\in \mathcal{W}} \mathcal{J}(v),
\end{equation}
where 
\[
\mathcal{W}=\{ v: v \in {H^1_{div,\Lambda}(\Omega)}\mbox{ such that }  
\tau_k(v)=m_k, \quad 1\leq k\leq N^*_h\}.
\]
Problem (\ref{eq:problem-with-restriction}) above 
can be view as Lagrange multipliers min-max optimization 
problem. See \cite{MR2168342} and references therein. 
Then, in case an approximation of $p$, say $p_h$  is 
required to satisfy the constraints  $\tau_k(p^h)=m_k$, 
$1\leq k\leq N^*_h$, we can do that by discretizing directly the 
formulation \eqref{eq:problem-with-restriction}. In 
particular, we can apply this approach to a set of 
mass conservation restrictions used in finite volume 
discretizations.

In order to proceed with the associate Lagrange formulation, we define 
$M^h = \mathbb{Q}^0({\mathcal{T}}_h^*)$ to be the space of piecewise 
constant functions on the dual mesh ${\mathcal{T}}_h^*$. For $\mu\in M^h $,
depending on the context, we also interpret   $\mu$ as the vector  $[\mu_k]_{k=1}^{N_h^*}\in \mathbb{R}^{N_h^*}$ where 
$\mu_k=\mu|_{V_k}$. The 
Lagrange multiplier formulation of problem 
(\ref{eq:problem-with-restriction}) can be written as:
Find $p \in {H^1_{div,\Lambda}(\Omega)}$ and $\lambda \in M^h$ 
that solve:
\begin{equation}\label{eq:discrete-problem-with-restriction-lag}
\{p,\lambda\} = \arg
\max_{\mu\in M^h }\min_{v\in {H^1_{div,\Lambda}(\Omega)},} \mathcal{J}(v)-
(\overline{a}(p,\mu)-\overline{F}(\mu)).
\end{equation}
Here, the total flux bilinear form 
$\overline{a}: {H^1_{div,\Lambda}(\Omega)}\times\, M^h \to \mathbb{R}$ 
is defined by
\begin{equation}\label{eq:def:overline-a}
\overline{a}(v,\mu)=
\sum_{k=1}^{N^*_h}
\int_{\partial V_{k}} -\Lambda  \nabla v\cdot 
\,\mathbf{n}\,\mu=
\sum_{k=1}^{N^*_h}
\mu_k\int_{\partial V_{k}} -\Lambda  \nabla v\cdot 
\mathbf{n} \quad \mbox{ for all 
}  v\in {H^1_{div,\Lambda}(\Omega)} \mbox{ and } \mu\in M^h .
\end{equation}
The functional 
$\overline{F}:M^h  \to \mathbb{R}$ is defined 
by\[\overline{F}(\mu)=\sum_{i=k}^{N_h^*} 
\mu_k\int_{V_{k }}q\quad  \mbox{ for all } \mu \in  M^h .\]
Note that problem (\ref{eq:discrete-problem-with-restriction-lag}) depends on 
${\mathcal{T}}_h^*$ and therefore depends on $h$. The first order conditions of the min-max problem above  
give the following saddle point problem:
Find $p \in {H^1_{div,\Lambda}(\Omega)},$ and $\lambda\in M^h $ 
that solve:
\begin{equation}\label{eq:saddlepoint}
\begin{array}{llr}
  a(p,v)+\overline{a}(v,\lambda)&=F(v) &\mbox{ for all } v\in {H^1_{div,\Lambda}(\Omega)}, \\
\overline{a}(p,\mu) &=\overline{F}(\mu)& \mbox{ for all } \mu \in M^h.\\
\end{array}
\end{equation}
See for instance \cite{MR2168342}. Note that if the exact 
solution of problem (\ref{eq:unconstraint}) satisfies the 
restrictions in the saddle point formulation above we 
have $\lambda=0$ and we get the uncoupled system
\begin{equation}\label{eq:saddlepoint2}
\begin{array}{llr}
a(p,v)&=F(v) &\mbox{ for all } v\in H^1_{div,\Lambda}(\Omega), \\
\overline{a}(p,\mu) &=\overline{F}(\mu)& \mbox{ for all } \mu \in M^h .\\
\end{array}
\end{equation}
Also observe that the second equation above corresponds 
to a family of equations, one for each triangulation parametrized by 
$h$, all of them have the same solution. 

\section{Discretization}\label{sec:dis}

Recall that we have introduced a primal mesh 
${\mathcal{T}}_h =\{  R_\ell \}_{\ell=1}^{N_h}$ {made of 
elements that are triangles or squares}. We also 
have given a dual mesh $ {\mathcal{T}}_h^*=\{  V_k \}_{k=1}^{N^*_h}$ 
where the elements are called control volumes. In order to fix ideas we assume that the number of control volumes of  ${\mathcal{T}}_h^*$ equals the number of free vertices of  ${\mathcal{T}}_h^*$. Figure 
\ref{figure1} illustrates a primal and dual mesh made 
of squares for the case $\partial \Omega_D=\partial \Omega$.

\begin{figure}[ht]
\centering
\includegraphics[scale=.5]{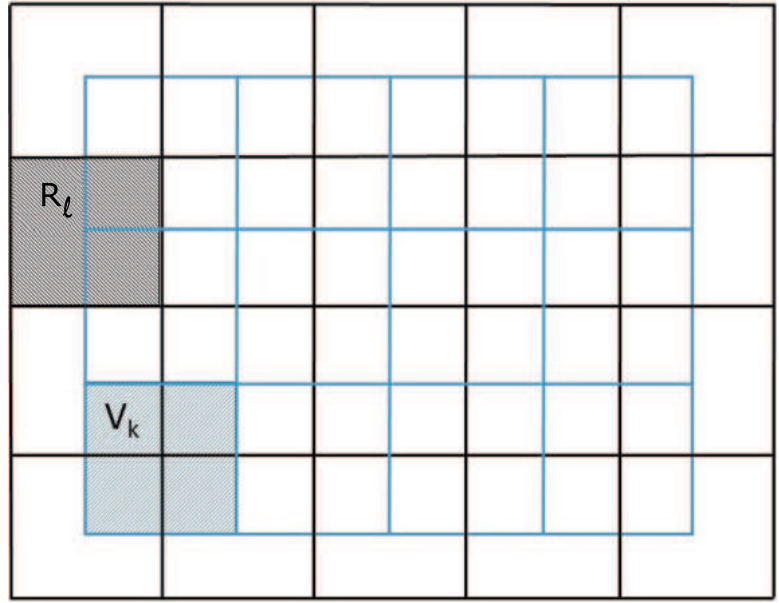}
\caption{Example of regular mesh made of squares and its dual 
mesh for the case $\partial \Omega_D=\partial \Omega$.}\label{figure1}
\end{figure}

Let us consider $ P^h = \mathbb{Q}^r ({\mathcal{T}}_h)$ the 
space of continuous and piecewise polynomials 
of degree $r$ on each element of the primal mesh, and 
$P^h_D = P^h \cap H_D^1(\Omega)$ (which are the functions 
in  $P^h$ that vanish in $ \partial \Omega_D $). Let 
$M^h = \mathbb{Q}^0({\mathcal{T}}_h^*)$ be the space of piecewise
constant functions on the dual mesh ${\mathcal{T}}_h^*$. We 
mention here that our analysis may be extended to 
different spaces and differential equations. See for 
instance \cite{MR3430146} where we consider GMsFEM 
spaces instead of piecewise polynomials. 

The discrete version of (\ref{eq:saddlepoint}) is to find 
$p^h \in P^h_D$ { and} $\lambda_h \in M^h$ 
such that

\begin{equation}\label{cela}
\begin{array}{llr}
a(p^h,v^h)+\overline{a}(v^h,\lambda^h)&=F(v^h) &\mbox{ for all } v^h\in P^h_D, \\
\overline{a}(p^h,\mu^h) &=\overline{F}(\mu^h)& \mbox{ for all } \mu^h \in M^h.\\
\end{array}
\end{equation}

Let $\left\lbrace \varphi_i \right\rbrace$ be the 
standard basis of $P^h_D$. We define the matrix
\begin{equation}
A = \left[ a_{i,j} \right] \quad \mbox{ where } 
a_{ij} = \int_{\Omega} \Lambda \nabla\varphi_i 
\cdot \nabla\varphi_j.
\end{equation}

Note that $A$ is the finite element stiffness matrix 
corresponding to finite element space $P^h_D$. Introduce 
also the matrix
\begin{equation}
\overline{A} = \left[ \overline{a}_{k,j} \right]  
\quad \mbox{ where } \overline{a}_{k,j} = 
\int_{\partial V_k} -\Lambda \nabla\varphi_j \cdot \textbf{n}.
\end{equation}

With this notation, the matrix form of the discrete
saddle point problem is given by,
\begin{equation}\label{eq:saddle}
\left[
\begin{array}{cc}
A&\overline{A}^T \\
\overline{A}&O
\end{array} \right]
\left[ \begin{array}{c}
p^h\\
\lambda^h
\end{array}
\right] = \left[ \begin{array}{c}
f\\
\overline{f}
\end{array}
\right]
\end{equation}
where the vectors $f=[f_i]_{i=1}^{N_h}$ and $\overline{f}=[\overline{f}_k]_{k=1}^{N_h^*}$  are defined respectively by
\[f_{i} = 
\int_{\Omega} q\, \varphi_i \quad \mbox{ and } \quad  \overline{f}_k= \int_{V_k} q.
\]

For instance, in the case of the primal and dual 
triangulation of Figure \ref{figure2} and polynomial degree $r=2$, 
the finite element matrix $ A $  is a sparse matrix 
with 19 diagonals. Also, for a control volume $ V_k $ there are at most $9$ supports 
of basis functions $\varphi_j$ with non-empty intersection with it, see Figure \ref{figure2}.

\begin{figure}[ht]
\centering
\includegraphics[scale=.6]{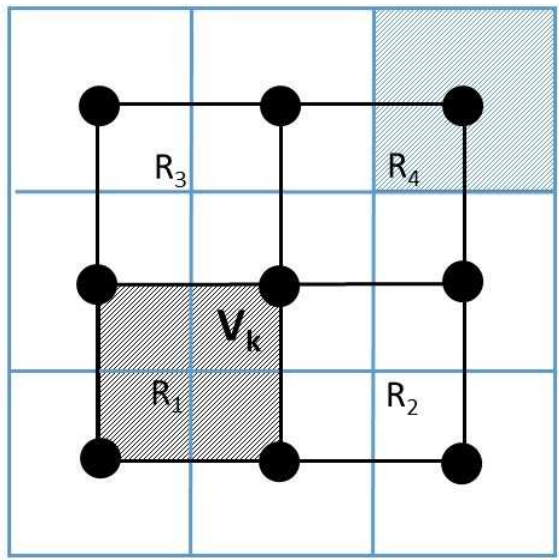}
\caption{Control volumes that intersect the support of 
a $ \mathbb{Q}^2 ({\mathcal{T}}_h)$ basis function.}\label{figure2}
\end{figure}

\begin{remark}
Note that matrix $\overline{A}$ is related to classical 
(low order) finite volume matrix. Matrix $\overline{A}$ 
is a rectangular matrix with more columns than rows. 
Several previous works on conservative high-order 
approximation of second order elliptic problem have 
been designed by ``adding'' rows using several constructions. For instance, one can proceed as follows:
\begin{enumerate}
\item Construct additional control volumes and test the 
approximation spaces against piecewise constant functions 
over the total of control volumes (that include the dual 
grid element plus the additional control volumes). We 
mention that constructing additional control volumes is 
not an easy task and might be computationally expensive.
We refer the interested reader to \cite{chen,chen2,chen3} 
for additional details.

\item Use additional basis functions that correspond to nodes
 other than vertices 
to obtain an FV/Galerkin formulation. This option has the 
advantage that no geometrical constructions have to be 
carried out. On the other hand, this formulation seems 
difficult to analyze. Also, some preliminary numerical 
tests suggest that the resulting linear  system becomes 
unstable for higher order approximation spaces (especially 
for the case of high-contrast multiscale coefficients).

\item Use the Ritz formulation with restrictions \eqref{cela}.

\end{enumerate}

Note that if  piecewise polynomials of degree $r=1$ are used, 
in the linear system \eqref{eq:saddle}, 
the restriction matrix corresponds to the usual finite volume 
matrix.  This matrix is known to be invertible. In this case, 
the affine space $\mathcal{W}$ is a singleton. Moreover, 
the only  function $p_h$ satisfying the restriction is given 
by $p_h=(\overline{A})^{-1} \overline{f}$. The Ritz formulation 
\eqref{cela} reduces to the classical finite volume method.

Then, in the Ritz sense, the solution of \eqref{cela} is 
not worse than any of the solutions obtained by the method 1.\ or
 2.\ mentioned above. Furthermore, the solution of the 
associated linear system \eqref{cela}, which is a saddle 
point linear system can be readily implemented using
efficient solvers for the matrix $A$ (or efficient solvers 
for the classical finite volume matrix $\overline{A}$);
See for instance \cite{MR2168342}. Additionally, we mention 
that the analysis of the method can be carried out using 
usual tools for the analysis of restricted minimization of 
energy functionals and mixed finite element methods. The 
numerical analysis of our methodology is under current 
investigation and it will presented elsewhere.
\end{remark}

\section{Analysis}\label{sec:analysis}

We show next that imposing the conservation in control 
volumes using Lagrange multipliers does not interfere 
with the optimality of the approximation in the $H^1$ 
norm. As we will see, imposing constraints will result 
in non optimal $L^2$ approximation but we were able to 
reformulate the $L^2$ approximation to get back to the 
optimal approximation by using the discrete Lagrange multiplier
as a corrector. 

Before proceeding 
we introduce notation to avoid proliferation of constants. 
We use the notation $A \preceq B$ to indicate that there 
is a constant $C_1$ such that $A\leq C_1 B$. If additionally 
there exist $C_2$ such that $B\leq C_2 A$ we write $A \asymp B$. These
constants do not depend on $\Lambda$, $u$, $u_h$, $\lambda_h$, $q$,
they might depend on the shape regularity of the elements and
the shape of $\Omega$. \\ 

 Denote  $\|v\|^2_a = \int_\Omega \Lambda \nabla v \cdot  \nabla v$ 
for all $v\in H^1_D(\Omega)$ and let us remind that
$H^1_{div,\Lambda} := \{v \in H^1_D(\Omega): \Lambda \nabla v \in 
\mbox{H}(\mbox{div},\Omega)\}$, and set $V^h = \mbox{Span}\{P_D^h, H^1_{div,\Lambda}\}$. 
We present a concrete example of the norms of $V^h$ and $M^h$ in the 
next section, see (\ref{eq:norm-for-P2}) and (\ref{eq:norm-for-Mh}), 
respectively.\\

{\bf Assumption A:} There exist norms $\|\cdot \|_{V^h}$ and $\|\cdot \|_{M^h}$ for 
 $V^h$ and $M^h$, respectively, such that 
\begin{enumerate} 
\item Augmented norm: $\|v\|_a \leq \|v\|_{V^h}$ forall $v \in V_h$. 
\item Continuity: there exists $\|\bar{a}\| \in \mathbb{R}$ such that 
\begin{equation}\label{continuity_abar}
|\bar{a}(v,\mu^h)| \leq \|\bar{a}\|\ \|v\|_{V^h} \|\mu^h\|_{M^h}~~\forall 
v \in V_h~~~\mbox{and}~~~\mu^h \in M^h.
\end{equation}\,\item Inf-Sup:  there exists $\alpha >0$ such that 
\begin{equation}\label{infsup}
\inf_{\mu^h\in M^{h}} \sup_{v^h \in P^h_D} \frac{\overline{a}(v^h,
\mu)}{\Vert v^h\Vert_a \, \Vert\mu^h\Vert_{M^h}}\geq
\alpha > 0.
\end{equation}
\end{enumerate}

\begin{remark}
The Inf-Sup condition above can be replaced by:  
there exists $\alpha >0$ such that 
\begin{equation}\label{infsup2}
\inf_{\mu^h\in M^{h}} \sup_{v^h \in P^h_D} \frac{\overline{a}(v^h,
\mu)}{\Vert v^h\Vert_{V^h} \, \Vert\mu^h\Vert_{M^h}}\geq
\alpha > 0.
\end{equation}
\end{remark}

We have the following result. Assume that $\{p,\lambda\}$ 
is the solution of (\ref{eq:saddlepoint})
and $\{p_h,\lambda_h\}$ the solution of 
(\ref{cela}). We have the following result. The proof 
uses classical approximation techniques for saddle point problems. 

\begin{theorem}\label{thm:t1}
Assume that ``Assumption A'' holds. Then, there exists a constant 
$C$ such that
\[
\Vert p-p^h\Vert_a  \leqslant 2
\left(1 + \frac{\Vert\overline{a}\Vert}{\alpha} \right) 
\inf_{v^h\in P^h_D} \Vert p-v^h\Vert_{V^h}.\]

\end{theorem}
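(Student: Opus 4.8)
The plan is to run the classical C\'ea/Brezzi quasi-optimality argument for saddle point discretizations, specialized to the present setting, which is especially favorable because the multiplier space $M^h$ is \emph{not} discretized further — the same $M^h$ appears in (\ref{eq:saddlepoint}) and in (\ref{cela}) — so that no ``best approximation of $\lambda$'' term enters. First I would note that, by construction of the $m_k$, the exact solution $p$ of (\ref{eq:problem}) already satisfies the flux constraints, so the continuous multiplier vanishes ($\lambda=0$; cf.\ (\ref{eq:saddlepoint2}) and the remark after it), and in particular $a(p,v^h)=F(v^h)$ and $\overline{a}(p,\mu^h)=\overline{F}(\mu^h)$ for all $v^h\in P^h_D$ and $\mu^h\in M^h$. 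Subtracting (\ref{cela}) gives the Galerkin orthogonality relations
\begin{equation}\label{eq:galortho}
a(p-p^h,v^h)=\overline{a}(v^h,\lambda^h)\quad\forall\,v^h\in P^h_D,\qquad
\overline{a}(p-p^h,\mu^h)=0\quad\forall\,\mu^h\in M^h.
\end{equation}
Note also that $a$ is, by definition and Cauchy--Schwarz, both coercive and continuous with constant $1$ in $\|\cdot\|_a$.

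The heart of the argument is to upgrade an arbitrary approximant of $p$ to one satisfying the flux constraints, using the discrete Inf-Sup condition (\ref{infsup}). Since $\|\cdot\|_a$ is a Hilbert norm on $P^h_D$, (\ref{infsup}) is equivalent to: the operator $B^h\colon P^h_D\to (M^h)'$ given by $\langle B^h w,\mu^h\rangle:=\overline{a}(w,\mu^h)$ is onto and admits a right inverse of norm at most $1/\alpha$ with respect to $\|\cdot\|_a$ (take the preimage in the $a$-orthogonal complement of $\ker B^h$); see \cite{MR2168342}. Fixing $v^h\in P^h_D$ and applying this to the functional $\mu^h\mapsto\overline{a}(p-v^h,\mu^h)$, whose $(M^h)'$-norm is at most $\|\overline{a}\|\,\|p-v^h\|_{V^h}$ by the Continuity assumption (\ref{continuity_abar}), produces $z^h\in P^h_D$ with $\overline{a}(z^h,\mu^h)=\overline{a}(p-v^h,\mu^h)$ for all $\mu^h\in M^h$ and $\|z^h\|_a\le(\|\overline{a}\|/\alpha)\,\|p-v^h\|_{V^h}$. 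Setting $w^h:=v^h+z^h\in P^h_D$ and using $\overline{a}(p,\mu^h)=\overline{F}(\mu^h)=\overline{a}(p^h,\mu^h)$ one gets $\overline{a}(w^h-p^h,\mu^h)=0$ for all $\mu^h\in M^h$; and, applying item~1 of Assumption~A (the inequality $\|\cdot\|_a\le\|\cdot\|_{V^h}$ valid on $V^h$) to $p-v^h$,
\begin{equation}\label{eq:wapprox}
\|p-w^h\|_a\ \le\ \|p-v^h\|_a+\|z^h\|_a\ \le\ \Bigl(1+\tfrac{\|\overline{a}\|}{\alpha}\Bigr)\|p-v^h\|_{V^h}.
\end{equation}

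Finally I would show that $w^h$ is $a$-quasi-optimal. Since $w^h-p^h\in P^h_D$ lies in the kernel $\ker B^h$ and $\lambda^h\in M^h$, testing the first relation in (\ref{eq:galortho}) with $v^h=w^h-p^h$ kills the right-hand side and leaves $a(p-p^h,w^h-p^h)=0$; hence $\|p^h-w^h\|_a^2=a(p-w^h,p^h-w^h)\le\|p-w^h\|_a\,\|p^h-w^h\|_a$, so $\|p^h-w^h\|_a\le\|p-w^h\|_a$. The triangle inequality then gives $\|p-p^h\|_a\le 2\|p-w^h\|_a$, and combining this with (\ref{eq:wapprox}) and taking the infimum over $v^h\in P^h_D$ yields the stated bound. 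The one point requiring care — there is no genuine obstacle, the result being classical — is the norm bookkeeping: (\ref{infsup}) controls the corrector $z^h$ in $\|\cdot\|_a$ rather than $\|\cdot\|_{V^h}$, which is exactly what is needed since the error is measured in $\|\cdot\|_a$ and the only change of norm occurs on $p-v^h\in V^h$, where item~1 of Assumption~A applies; if (\ref{infsup}) is replaced by its $V^h$-version (\ref{infsup2}) the argument goes through unchanged. Equivalently, one may simply invoke the standard abstract error estimate for saddle point approximations (\cite{MR2168342}): items 1--3 of Assumption~A are precisely its hypotheses with both constants of $a$ equal to $1$, and the argument above is its proof in the present situation.
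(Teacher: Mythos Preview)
Your proof is correct and follows essentially the same route as the paper's: both arguments combine (i) an inf--sup based correction step that upgrades an arbitrary $v^h\in P^h_D$ to a constraint-satisfying $w^h=v^h+z^h$ with $\|p-w^h\|_a\le(1+\|\overline a\|/\alpha)\|p-v^h\|_{V^h}$, and (ii) a C\'ea-type step showing $\|p-p^h\|_a\le 2\|p-w^h\|_a$ using that $w^h-p^h$ lies in the kernel of the constraint operator. The only cosmetic differences are that you carry out the two steps in the opposite order and phrase step~(ii) via the Galerkin orthogonality relation $a(p-p^h,w^h-p^h)=\overline a(w^h-p^h,\lambda^h)=0$, whereas the paper reaches the same identity by setting up a small auxiliary problem in $W^h$; the underlying computation is identical.
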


\begin{proof} Note that in both problems, \eqref{eq:saddlepoint}  
and \eqref{cela}, $\mu$ belongs to the finite dimensional 
subspace $M^{h}$. Also, the exact solution of the Lagrange 
multiplier component of \eqref{eq:saddlepoint2} is $\lambda=0$.
Now we derive error estimates following classical saddle point 
approximation analysis. Define
\begin{align*}
W^h(q) := \left\lbrace v_h\in P^h_D  :  
\overline{a}(v^h,\mu) = \overline{F}(\mu) \,\,  
\mbox{ for all } \mu\in M^{h} \right\rbrace
\end{align*}
and
\begin{align*}
W^h := \left\lbrace v_h\in P^h_D:  
\overline{a}(v^h,\mu) = 0 \,\,  \mbox{ for all } \mu\in 
M^{h} \right\rbrace.
\end{align*}
First we prove 
\begin{equation}\label{eq:af1}
\Vert p-p^h\Vert_a \leq 2 \inf_{w^h\in W^h(q)}\Vert p-w^h\Vert_a.
\end{equation}

The inf-sup above in \eqref{infsup} implies that $W^h(q)$ 
(as well as $W^h$) is not empty. Take any $w^h  \in W^h(q)$ 
and solve for $z^h$ the problem,
\begin{equation}  \label{eq:aux1}
  a(v^h,z^h) = F(z^h) - a(w^h,z^h) \quad \mbox{ for all } z^h  \in W_h.
\end{equation}
Since $a$ is elliptic there exists a unique solution and therefore
 \begin{equation}
 p^h = v^h-w^h, 
 \end{equation}
where $p^h$ is the solution of \eqref{cela}. We have from 
\eqref{eq:saddlepoint2} and \eqref{cela} and using 
\eqref{eq:aux1} that
\begin{align*}
a(v^h,v^h) &= a(p^h-w^h,v^h)\\
 &= a(p^h,v^h) - a(w^h,v^h)\\
  &= {F(v^h)}- a(w^h,v^h)\\
 &= a(p,v^h) - a(w^h,v^h)\\
 &= a(p-w^h,v^h).
\end{align*}
Then, by using the ellipticity of $a$, we have
\begin{equation}
\Vert v^h\Vert^2_a = a(v^h,v^h)=a(p-w^h,v^h) \leqslant  \Vert p-
w^h\Vert_a\Vert v^h\Vert_a.
\end{equation}
Then
\begin{align*}
\Vert p - p^h\Vert_a &\leqslant \Vert p-w^h\Vert_a + \Vert w^h-p^h\Vert_a \\ 
 &\leqslant \Vert p - w^h\Vert_a + \Vert p-w^h\Vert_a = 2 \Vert p-w^h\Vert_a	
\end{align*} 
so that \eqref{eq:af1} holds true.  

  We now show that
\begin{align}\label{eq:af2}
\inf_{w^h\in W^h(q)} \Vert p-w^h\Vert_a \leqslant 
\left(1 + \frac{\Vert\overline{a}\Vert}{\alpha} \right) 
\inf_{v^h\in P^h_D} \Vert p-v^h\Vert_{V^h}
\end{align}

Take any $v^h \in W^h$. The inf-sup condition \eqref{infsup} 
implies that there exists a unique $z^h \in P^h_D$ such that
\begin{align*}
\overline{a}(z^h,\mu) = \overline{a}(p-v^h,\mu) 
&& \mbox{for all } \mu\in M^h.
\end{align*}
Then we have that $z^h \neq 0$, 
\begin{align*}
\frac{\overline{a}(z^h,\mu)}{\Vert 
z^h\Vert_a \Vert\mu\Vert_{M^h}} \geq \alpha
\end{align*}
and therefore
\begin{align*}
\Vert z^h\Vert_a &\leqslant 
\frac{1}{\alpha}\cdot \frac{\overline{a}(z^h,\mu)}
{\Vert\mu^h\Vert}=\frac{1}{\alpha}\cdot 
\frac{\overline{a}(p-v^h,\mu)}{\Vert\mu\Vert_{M^h}}\\
 &\leqslant \frac{1}{\alpha}\Vert 
\overline{a}\Vert \Vert p-v^h\Vert_{V^h}.
\end{align*}
Note that we have used the continuity of $\bar{a}$ in the extended norm 
$\| \cdot \|_{V^h}$. Put $w^h = z^h + v^h$ then
\begin{align*}
\overline{a}(w^h,\mu) &= \overline{a}(z^h,\mu) 
+ \overline{a}(v^h,\mu)\\
&= \overline{a}(p-v^h,\mu) + \overline{a}(v^h,\mu)\\
&= \overline{a}(p,\mu)\\
&= \overline{F}(\mu).
\end{align*}
Therefore we have that $w^h \in W_h(q)$. Moreover, 
\begin{align*}
\Vert p-w^h\Vert_a &\leqslant\Vert p-v^h\Vert_a
+\Vert v^h-w^h\Vert_a\\
&\leqslant\Vert p-v^h\Vert_a + \Vert z_h\Vert_a\\
&\leqslant \Vert p-v^h\Vert_a + 
\frac{\Vert \overline{a}\Vert}{\alpha}\, \Vert p-
v^h\Vert_{V^h}\\
&\leqslant \left(1+ \frac{\Vert 
\overline{a}\Vert}{\alpha} \right) \Vert p-v^h\Vert_{V^h}.
\end{align*}

Combining \eqref{eq:af1} and \eqref{eq:af2} we get the result.
\end{proof}

From now on we assume from that $\Lambda = I$ (identity). In this case, 
$\|\cdot\|_a = |\cdot|_{H^1(\Omega)}$, and as we will see in Section \ref{sec:smooth} for
regular meshes and $\mathbb{Q}^r({\mathcal{T}}_h)$ elements that the ``Assumption A'' holds with
$1/\alpha = O(1)$, $|\bar{a}| = O(1)$ with the norms $V^h$ and $M^h$
defined in (\ref{eq:norm-for-P2}) and (\ref{eq:norm-for-Mh}), 
respectively. The next two Assumptions are discussed at the end of
Section \ref{sec:smooth}. 

{\bf Assumption B:} Assume that solution $p$  of the problem
(\ref{eq:problem1}) is in $H^{r+1}(\Omega)$ and the following
approximation holds for some integer $r \leq 1$
\[
\inf_{v_h \in P^h_D} \|p - v_h\|_{V^h} \preceq  h^r|p|_{H^{r+1}(\Omega)}.
\]

As a corollary of ``Assumptions A and B'' and Lemma \ref{thm:t1}, we obtain
\[
\Vert p-p^h\Vert_{V^h} \preceq h^r |p|_{H^{r+1}(\Omega)}.
\]

As we will show in the numerical experiments, the error 
$\Vert p-p^h\Vert_{L ^2(D)}$ is not optimal but according 
to the next result if we correct $p^h$ to $p^h+\lambda^h$ 
we recover the optimal approximation. The proof of the following 
results follows from a duality argument similar to that of the 
Aubin-Nitsche method; see \cite{MR2373954,MR2322235}. Let
us introduce the following regularity assumption:\\

{\bf Assumption C:} The problem is $H^2(\Omega)$ regular (see \cite{MR2373954}) if for any
$\tilde{q} \in L^2(\Omega)$ as a right-hand side for the problem
(\ref{eq:problem1}), its solution $\tilde{p}$ satisfies
\[
\|\tilde{p}\|_{H^2(\Omega)}\preceq \|\tilde{q}\|_{L^2(\Omega)}.
\]

\begin{theorem} \label{Theorem1}
Assume that $\Lambda = I$. Assume also that ``Assumptions A, B and C'' hold. 
Then, 
\[
\Vert p-(p^h+\lambda^h)\Vert_{L^2(\Omega)} \preceq h^{r+1} |p|_{H^{r+1}(\Omega)}.
\]
\end{theorem}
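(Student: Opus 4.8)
The plan is to run the standard Aubin--Nitsche duality argument, but with the crucial twist that the quantity we test against the dual solution is the \emph{corrected} error $p - (p^h + \lambda^h)$ rather than $p - p^h$. First I would set $e = p - (p^h+\lambda^h)$, let $\tilde q = e$ be the right-hand side of the auxiliary problem \eqref{eq:problem1}, and let $\tilde p \in H^1_D(\Omega)$ be its solution; by ``Assumption C'' we have $\|\tilde p\|_{H^2(\Omega)} \preceq \|e\|_{L^2(\Omega)}$. The starting identity is
\begin{align*}
\|e\|_{L^2(\Omega)}^2 &= a(\tilde p, e) = a(\tilde p, p) - a(\tilde p, p^h) - a(\tilde p, \lambda^h).
\end{align*}
Here I must make sense of $a(\tilde p, \lambda^h)$: since $\Lambda = I$, $a(\tilde p,\lambda^h) = \int_\Omega \nabla \tilde p \cdot \nabla \lambda^h$, but $\lambda^h \in M^h$ is piecewise constant, so $\nabla \lambda^h = 0$ in the interior of each $V_k$ and this term has to be interpreted distributionally as boundary contributions; integrating by parts cellwise and using $-\Delta \tilde p = e$ gives $\int_\Omega \nabla\tilde p\cdot\nabla\lambda^h = \sum_k \lambda^h_k \int_{\partial V_k} \nabla\tilde p\cdot\mathbf n + \sum_k \lambda^h_k\int_{V_k} e = \overline a(\tilde p,\lambda^h)\cdot(-1)\ \text{sign adjustments} + \sum_k\lambda^h_k\int_{V_k}e$. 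The cleaner route, which I would actually take, is to avoid differentiating $\lambda^h$ at all: write $a(\tilde p, e)$ directly as $\int_\Omega \nabla\tilde p\cdot\nabla(p-p^h) - \int_\Omega \nabla\tilde p\cdot\nabla\lambda^h$, handle the first piece by Galerkin orthogonality in the constrained space, and handle the second by cellwise integration by parts so that $-\int_\Omega\nabla\tilde p\cdot\nabla\lambda^h = -\sum_k\lambda^h_k\int_{V_k} e$... wait, that re-introduces $e$; instead I integrate by parts to produce $\sum_k \lambda^h_k \overline\tau_k(\tilde p)$ where $\overline\tau_k(\tilde p) = \int_{\partial V_k}\nabla\tilde p\cdot\mathbf n$, which by the divergence theorem equals $-\int_{V_k}e$, i.e. $-\int_{V_k}\tilde q$.

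The key mechanism is then: subtract from $\tilde p$ a suitable approximant and exploit \emph{both} equations of the saddle-point system \eqref{cela}. Let $\tilde p^h \in P^h_D$ be the best $V^h$-approximant (or the interpolant) of $\tilde p$; by ``Assumption B'' applied to $\tilde p$ (which is $H^2$, i.e. the case of the approximation bound with one derivative), $\|\tilde p - \tilde p^h\|_{V^h} \preceq h|\tilde p|_{H^2(\Omega)} \preceq h\|e\|_{L^2(\Omega)}$. From the first equation of \eqref{cela}, $a(p^h, v^h) + \overline a(v^h,\lambda^h) = F(v^h) = a(p,v^h)$ for all $v^h \in P^h_D$, hence $a(p - p^h, v^h) = \overline a(v^h,\lambda^h)$. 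Taking $v^h = \tilde p^h$ and combining with the integration-by-parts identity above, the $L^2$ norm squared splits into a term $a(p - (p^h+\lambda^h), \tilde p - \tilde p^h)$ — which is bounded by $\|p-(p^h+\lambda^h)\|_{V^h}\|\tilde p-\tilde p^h\|_{V^h} \preceq h^{r+1}|p|_{H^{r+1}}\|e\|_{L^2}$ using the corollary of Assumptions A and B together with a bound on $\|\lambda^h\|$ in the $V^h$-norm sense — plus a term of the form $\overline a(\tilde p^h, \lambda^h) - \sum_k\lambda^h_k\int_{V_k}\tilde q$. This last term is exactly where the second saddle-point equation and the conservation property of $p^h$ enter: since $\tilde p$ solves the continuous problem, $\overline\tau_k(\tilde p) = -\int_{V_k}\tilde q$, so $\sum_k\lambda^h_k\int_{V_k}\tilde q$ matches $\overline a(\tilde p,\lambda^h)$; the difference $\overline a(\tilde p^h - \tilde p, \lambda^h)$ is then controlled by continuity \eqref{continuity_abar}, $|\overline a(\tilde p^h-\tilde p,\lambda^h)| \le \|\overline a\|\,\|\tilde p^h - \tilde p\|_{V^h}\|\lambda^h\|_{M^h} \preceq h\|e\|_{L^2}\|\lambda^h\|_{M^h}$.

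The main obstacle, and the step I would spend the most care on, is bounding $\|\lambda^h\|_{M^h}$ (and the analogous quantity controlling $\|\lambda^h\|$ when $\lambda^h$ is viewed inside the $V^h$-norm via the correction $p^h+\lambda^h$). This is where the inf-sup condition \eqref{infsup} is used in the other direction: from the first equation of \eqref{cela}, $\overline a(v^h,\lambda^h) = F(v^h) - a(p^h,v^h) = a(p - p^h, v^h)$, so
\begin{align*}
\alpha\|\lambda^h\|_{M^h} \le \sup_{v^h\in P^h_D}\frac{\overline a(v^h,\lambda^h)}{\|v^h\|_a} = \sup_{v^h\in P^h_D}\frac{a(p-p^h,v^h)}{\|v^h\|_a} \le \|p - p^h\|_a \preceq h^r|p|_{H^{r+1}(\Omega)},
\end{align*}
using Theorem \ref{thm:t1} and ``Assumption B'' in the last step. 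Feeding this $h^r$ bound on $\|\lambda^h\|_{M^h}$ into the estimate of the previous paragraph yields the $h\cdot h^r = h^{r+1}$ scaling on every term. Finally I would collect all contributions, divide through by $\|e\|_{L^2(\Omega)}$, and conclude $\|p-(p^h+\lambda^h)\|_{L^2(\Omega)} \preceq h^{r+1}|p|_{H^{r+1}(\Omega)}$. A secondary technical point to check carefully is that the cellwise integration by parts producing $\overline\tau_k(\tilde p)$ is legitimate: this needs $\tilde p \in H^2$ so that $\nabla\tilde p$ has a well-defined normal trace on $\partial V_k$ and $\Delta\tilde p \in L^2$, which is exactly what ``Assumption C'' provides, together with the observation (already made in Section \ref{sec:con}) that $\tau_k$ extends continuously to such functions.
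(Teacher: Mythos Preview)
Your overall strategy---Aubin--Nitsche duality with the dual problem $-\Delta\tilde p = e$, plus an inf--sup bound $\|\lambda^h\|_{M^h}\preceq h^r|p|_{H^{r+1}}$---is sound and does lead to the result, but one step of your decomposition is genuinely wrong and must be repaired. You write that the squared $L^2$ norm produces a term $a\big(p-(p^h+\lambda^h),\,\tilde p-\tilde p^h\big)$ bounded via $\|p-(p^h+\lambda^h)\|_{V^h}$. This is meaningless: $\lambda^h\in M^h$ is piecewise constant on the dual mesh, so $\lambda^h\notin H^1(\Omega)$ and neither $a(\cdot,\lambda^h)$ nor $\|\,\cdot+\lambda^h\|_{V^h}$ is defined. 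The correct starting point is not $\|e\|^2=a(\tilde p,e)$ but rather $\|e\|^2=(e,-\Delta\tilde p)_{L^2}$, which is legitimate because Assumption~C gives $\tilde p\in H^2$. Splitting $e=(p-p^h)-\lambda^h$ and integrating by parts cellwise on the $\lambda^h$ piece yields the clean identity
\[
\|e\|_{L^2}^2 \;=\; a(p-p^h,\tilde p) \;-\; \overline a(\tilde p,\lambda^h)
\;=\; a(p-p^h,\tilde p-\tilde p^h)\;-\;\overline a(\tilde p-\tilde p^h,\lambda^h),
\]
where the second equality uses $a(p-p^h,\tilde p^h)=\overline a(\tilde p^h,\lambda^h)$ from the first row of \eqref{cela}. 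Now both terms are honest: the first is $\le\|p-p^h\|_a\|\tilde p-\tilde p^h\|_a\preceq h^r\cdot h\|e\|_{L^2}$, and the second is $\le\|\overline a\|\,\|\tilde p-\tilde p^h\|_{V^h}\|\lambda^h\|_{M^h}\preceq h\|e\|_{L^2}\cdot h^r$ by your (correct) inf--sup argument. No ``$\|\lambda^h\|$ in the $V^h$-norm sense'' is needed or possible.

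By contrast, the paper's proof avoids tracking $\|\lambda^h\|_{M^h}$ altogether. Instead of pairing against the continuous dual solution $\tilde p=Sg$ only, it also introduces the \emph{discrete} dual saddle pair $(S_1^h g,\,S_0^h g)$ solving \eqref{cela} with right-hand side $g$. The symmetry of the saddle structure then produces the exact identity
\[
(p-(p^h+\lambda^h),\,g)\;=\;a(p-p^h,\;Sg-S_1^h g),
\]
with no residual $\overline a$ term at all; one extra factor of $h$ comes from Theorem~\ref{thm:t1} applied to the dual problem. The paper's route is algebraically cleaner (two $H^1$ errors multiplied, nothing else), while yours is more elementary in that it uses only the continuous dual plus an interpolant, at the cost of an extra term that you must control with the inf--sup condition. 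Both are valid once your decomposition is corrected as above.
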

\begin{proof} For $g\in L^2$ define $\mathcal{S}^h_1g$ 
and $\mathcal{S}^h_0g$ as the solution of
\begin{align}\label{celaproof}
&a(\mathcal{S}^h_1g,v^h) + \overline{a}(v^h,\mathcal{S}^h_0g) = \int_D g v^h && 
\mbox{ for all } v^h \in H^1_{div,I} \\
&\overline{a}(\mathcal{S}^h_1g,\mu) = \int_D g v^h\mu  && \mbox{ for 
all } \mu \in M^h.
\end{align}
Analogously, define $Sg$ as the solution of

\begin{equation}\label{eq:saddlepoint2proof}
\begin{array}{llr}
a(Sg,v)&=\int_D gv&\mbox{ for all } v\in H^1_{div,I}, \\
\overline{a}(Sg,\mu^h) &=\int_D g \mu^h& \mbox{ for all } \mu^h \in M^h .\\
\end{array}
\end{equation}
Observe that $p^h=S^h_1q$, $\lambda ^h=S^h_0 q$ and $p=Sq$. 
According to our previous result in Theorem \ref{thm:t1} 
combined with standard regularity and approximation results 
(\cite{MR2373954}) we have
\begin{equation}\label{eq:auxaprox}
 ||Sg- S^h_1g ||_a\preceq \inf_{v^h\in P^h_D} ||Sg- v^h ||_{V^h} \preceq 
 h ||Sg||_{H^2(\Omega)} \preceq h ||g||_{L^2(\Omega)}.
\end{equation}

Recall that, 
\begin{align}\label{eq:auxAubintrick1}
\Vert p-(p^h+\lambda^h)\Vert_{L^2(\Omega)} =
\sup_{g\in L^2} \frac{(p-(p^h+\lambda^h), g)}{\Vert g\Vert_{L^2(\Omega)}}.
\end{align}
By using the definition of $S$, $S^h_0$ and $S^h_1$ 
in \eqref{celaproof} and  \eqref{eq:saddlepoint2proof} we get
\begin{align*}
&(p-(p^h+\lambda^h), g)
= (p, g)_{0}-(p^h, g)_{0}-(\lambda^h, g)_{0}\\
&= a( Sg,p)-\Big( a(S^h_1g, p^h)_{0}+\overline{a}(p^h, S_0^hg)\Big)
-\overline{a}(S_1^hg,\lambda^h) \\
&= a( Sg,p)-\Big( a(S^h_1g, p^h)_{0}+\overline{a}(S_1^hg,\lambda^h)\Big)
-\overline{a}(p^h, S_0^hg) \\
&= a( Sg,p)-\left( \int_D f S^h_1g \right)-\overline{a}(p^h, S_0^hg) \\
&= a( Sg,p)- a(p, S^h_1g )-\overline{a}(p^h, S_0^hg) \\
&= a(p,Sg- S^h_1g )-\overline{a}(p^h, S_0^hg) \\
&= a(p-p^h,Sg- S^h_1g )+a(p^h,Sg- S^h_1g )-\overline{a}(p^h, S_0^hg) \\
&= a(p-p^h,Sg- S^h_1g )+a(p^h,Sg) -\Big( a( S^h_1g,p^h )-\overline{a}(p^h,
 S_0^hg) \Big) \\
&= a(p-p^h,Sg- S^h_1g )+\int_D g p^h -\left(\int_D g p^h  \right) \\
&= a(p-p^h,Sg- S^h_1g )\\
&\leq  ||p-p^h||_a ||Sg- S^h_1g ||_a\\
 &\preceq h |p-p^h|_a ||g||_{L^2(\Omega)} 
\end{align*}
In the  last step we have used \eqref{eq:auxaprox}. Replacing 
the last inequality in \eqref{eq:auxAubintrick1} and with ``Assumption B'',
we get the  result.
\end{proof}

\section{The case of piecewise polynomials of degree two in regular meshes}
\label{sec:smooth}

In this section we consider a regular mesh made of squares. 
See Figure \ref{figure1}. 
Define \[
\Gamma^*_h=\bigcup_{k=1}^{N^*_h} \partial V_k =\bigcup_{k, k^\prime=1}^{N^*_h}
(\partial V_k \cap \partial V_{k^\prime} )
\] 
that is, $\Gamma^*_h$ is the interior interface generated by 
the dual mesh.  For $\mu\in M^h$ define $[\mu]$ on 
$\Gamma^*_h$ as the jump across element interfaces, that is, 
$[\mu]|_{\partial V_k \cap \partial V_{k^{\prime}}}= \mu_k-\mu_{k^{\prime}}$.
Note that for $p \in V^h$  
\begin{align*}
\overline{a}(p,{\mu})&=\sum_{k=1}^{N^*_h}
\mu_k\int_{\partial V_k } -\nabla p \cdot \mathbf{n} = 
\int_{\Gamma^*_h}-\nabla p \cdot \mathbf{n} \,\left[ \mu\right].
\end{align*}

For each control volume $V_k$, denote by $E(k)$ the set 
of element of the primal mesh that intersect $V_k$. Note 
that in each control volume we have 
\[
\int_{\partial V_k}-\nabla p \cdot \mathbf{n} = \sum_{\ell \in E(k) } 
\int_{\partial V_k\cap R_\ell }-\nabla p \cdot \mathbf{n}.
\]
To motivate the definition of the norms we study the 
continuity of the bilinear form $\overline{a}$. Observe 
that, 
\begin{align*}
\left(\int_{\partial V_k \cap \partial V_{k^\prime}}
-\nabla p \cdot \mathbf{n} \,\left[ 
{\mu}\right]\right)^2 &\leq 
\left( h\int_{\partial V_k \cap \partial V_{k^\prime}}
(\nabla p \cdot \mathbf{n} )^2 \right) \, \left( \frac{1}{h} 
\int_{\partial V_k \cap \partial V_{k^\prime}} [\mu]^2 \right).
\end{align*}
And therefore by applying Cauchy inequality and adding up we get,
\[
|\overline{a}(p,{\mu})|\leq 
\left( h\int_{\Gamma^*_h} (\nabla p \cdot \mathbf{n} 
)^2 \right)^{1/2}\left( \frac{1}{h} 
\int_{\Gamma^*_h} [\mu]^2 \right) ^{1/2}.
\]
Using a trace inequality we get that 
\begin{align}
h\int_{\Gamma^*_h} (\nabla p \cdot \mathbf{n} )^2& = h\sum_{k=1}^{N^*_h}
\int_{\partial V_k} (\nabla p \cdot \mathbf{n} )^2\\
&=\sum_{k=1}^{N^*_h}
\sum_{\ell \in E(k)}h\int_{\partial V_k\cap R_\ell} (\nabla p \cdot \mathbf{n} )^2\\
& \preceq \sum_{k=1}^{N^*_h}
\sum_{\ell \in E(k)} \left( |p|^2_{H^1(V_k\cap R_\ell)} + h^2 (\|p_{xx}\|^2_{L^2(V_k\cap R_\ell)} + \|p_{yy}\|^2_{L^2(V_k\cap R_\ell)}) \right) \\
& =\sum_{\ell=1}^{N_h}\left(  |p|_{H^1(R_\ell)}^2 + h^2 (\|p_{xx}\|^2_{L^2(R_\ell)} + \|p_{yy}\|^2_{L^2(R_\ell)}) \right)\\
& = |p|_{H^1(\Omega)}^2 +h^2 \sum_{\ell=1}^{N_h} (\|p_{xx}\|^2_{L^2(R_\ell)} + \|p_{yy}\|^2_{L^2(R_\ell)}).
\end{align}

Now we are ready to define the norm
\begin{equation}\label{eq:norm-for-P2}
\Vert p \Vert^2_{V^h} = \vert p\vert^2_{H^1(\Omega)} + h^2\sum_{\ell=1}^{N_h} 
 (\|p_{xx}\|^2_{L^2( R_\ell)} + \|p_{yy}\|^2_{L^2(R_\ell)})
\end{equation} 
Note that if $p \in \mathbb{Q}^1({\mathcal{T}}_h)$ then  
$\Vert p\Vert_{V^h}^2 = \vert p\vert^2_{H^1(\Omega)}$. Also, 
if $p\in \mathbb{Q}^2({\mathcal{T}}_h)$ we have 
$\Vert p\Vert_{V^h}^2 \leq c\vert p\vert^2_{H^1(\Omega)}$ by using
inverse inequality. 

Also define the discrete  norm for the spaces of Lagrange 
multipliers as
\begin{equation}\label{eq:norm-for-Mh}
\Vert  {\mu} \Vert_{M^h}^2 =\frac{1}{h}\int_{\Gamma^*_h}[ \mu ]^2.
\end{equation}

We have shown above that the form $\overline{a}$ is continuous, 
that is, there is a constant $|\bar{a}|$ such that, 
\begin{align*}
|  \overline{a}(p, {\mu})|\leq |\bar{a} \Vert p\Vert_{V^h} \Vert   {\mu}\Vert_{M^h}. 
\end{align*}
This also implies continuity in the $H^1$ norm. Now let us
show the inf-sup condition.

\begin{theorem} Consider the norms for $\|\cdot\|_{a} = |\cdot|_{H^1(\Omega)}$
and  $M^h$ defined in (\ref{eq:norm-for-Mh}), 
respectively. There is a constant  $\alpha$ such that, 
\begin{equation}\label{infsup3}
\inf_{\mu\in M^h} \sup_{v^h \in \mathbb{Q}^1({\mathcal{T}}_h)} \frac{\overline{a}(v^h,
\mu)}{\Vert v^h\Vert_a \, \Vert\mu\Vert_{M^h}}\geq
\alpha > 0.
\end{equation}
\end{theorem}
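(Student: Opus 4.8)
## Proof Proposal

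The plan is to prove the inf-sup condition \eqref{infsup3} by explicit construction: given any $\mu \in M^h$, I will build a test function $v^h \in \mathbb{Q}^1({\mathcal{T}}_h)$ (the continuous piecewise bilinear space on the primal mesh of squares) whose associated total fluxes $\overline{a}(v^h,\mu)$ control $\|\mu\|_{M^h}^2$ from below, while $\|v^h\|_a$ is simultaneously bounded above by $\|\mu\|_{M^h}$ up to a mesh-independent constant. Since $r=1$ piecewise polynomials are exactly the classical finite volume setting described in the Remark above, this amounts to showing the classical (lowest-order) finite volume matrix $\overline{A}$, measured in the appropriate norms, is uniformly well-conditioned below.

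The key steps, in order, are as follows. First, I would use the jump representation already derived in this section, namely $\overline{a}(v^h,\mu) = \int_{\Gamma^*_h} -\nabla v^h \cdot \mathbf{n}\,[\mu]$, so that the target is to produce $v^h$ with $-\nabla v^h\cdot\mathbf{n}$ essentially proportional to $[\mu]$ along $\Gamma^*_h$. For a regular square mesh, each dual interface $\partial V_k \cap \partial V_{k'}$ is a segment joining the midpoints of two adjacent primal edges and passing through the shared primal vertex; across it the normal flux of a bilinear function is a simple linear combination of nodal values. Second, I would define $v^h$ nodally: set the value of $v^h$ at each primal vertex (equivalently, each control-volume center) to a scalar built from the neighboring $\mu_k$'s — the natural choice being $v^h$ at the node associated to $V_k$ equal to (a scaled) $\mu_k$ itself, or more robustly the solution of the low-order finite volume system with right-hand side a discrete surrogate of $[\mu]$. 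Third, I would estimate $\overline{a}(v^h,\mu)$ from below: a local (elementwise) computation on the reference square shows $\overline{a}(v^h,\mu) \succeq \frac{1}{h}\int_{\Gamma^*_h}[\mu]^2 = \|\mu\|_{M^h}^2$, using that the map from jumps-of-$\mu$ to jumps-of-nodal-values is, locally, an invertible linear map with mesh-independent condition number (this is where the regularity/uniformity of the square mesh is used essentially). Fourth, I would bound $\|v^h\|_a = |v^h|_{H^1(\Omega)}$ from above: since $v^h$ is piecewise bilinear with nodal values controlled by the $\mu_k$, a standard inverse-type estimate gives $|v^h|_{H^1(\Omega)}^2 \preceq \sum_k |v^h(\text{node }k) - v^h(\text{node }k')|^2 \preceq \frac{1}{h}\int_{\Gamma^*_h}[\mu]^2 = \|\mu\|_{M^h}^2$. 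Combining the lower bound on the numerator with the upper bound on $\|v^h\|_a$ and dividing yields $\overline{a}(v^h,\mu)/(\|v^h\|_a\|\mu\|_{M^h}) \geq \alpha > 0$ with $\alpha$ independent of $h$, and since the supremum over all of $\mathbb{Q}^1({\mathcal{T}}_h)$ is at least the value at this particular $v^h$, \eqref{infsup3} follows; the remark at the end of the section then upgrades this to the $\|\cdot\|_{V^h}$ version \eqref{infsup2} because $\|v^h\|_{V^h} \leq c\,|v^h|_{H^1(\Omega)}$ on $\mathbb{Q}^1$ by inverse inequality.

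The main obstacle I anticipate is the lower bound in the third step — showing $\overline{a}(v^h,\mu) \succeq \|\mu\|_{M^h}^2$ is not a pointwise inequality but a global one, because choosing $v^h$'s nodal values proportional to $\mu_k$ does \emph{not} make $-\nabla v^h\cdot\mathbf{n}$ pointwise proportional to $[\mu]$ on every interface simultaneously: the bilinear normal flux on one dual edge couples several nodal values. The clean way around this is a duality/surjectivity argument: the operator $B: \mathbb{Q}^1({\mathcal{T}}_h) \to M^h$ defined by $\langle Bv^h,\mu\rangle_{M^h} = \overline{a}(v^h,\mu)$ is exactly the classical finite volume operator, which is known to be invertible (as noted in the Remark, $\overline{A}$ is the usual invertible finite volume matrix), so it suffices to show its smallest singular value in the chosen norms is bounded below uniformly in $h$; this reduces, by a scaling argument to the reference cell, to a single finite-dimensional nondegeneracy check that is independent of $h$. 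I would therefore carry out the argument in that order: identify $\overline{a}(\cdot,\cdot)$ with the FV operator, establish its bijectivity on the relevant quotient (accounting for the constant-$\mu$ mode when $|\partial\Omega_D| = 0$, where $[\mu]=0$ forces $\mu$ constant and the $M^h$ "norm" is only a seminorm — handled by passing to $M^h/\mathbb{R}$), and close with the reference-cell scaling to get $h$-independence of $\alpha$.
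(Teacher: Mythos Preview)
Your proposal is correct and follows essentially the same route as the paper: construct $v^h\in\mathbb{Q}^1(\mathcal{T}_h)$ by setting its nodal value at the vertex in $V_k$ equal to $\mu_k$, then verify by an explicit reference-square computation both that $|v^h|_{H^1}^2\asymp\|\mu\|_{M^h}^2$ and that $\overline{a}(v^h,\mu)\succeq\|\mu\|_{M^h}^2$. Your anticipated obstacle does not materialize: the paper carries out exactly the elementwise computation you describe in step three, and the cross terms you worry about (e.g.\ $(P_2-P_1)(P_4-P_3)$ with coefficient $\tfrac18$ against diagonal coefficients $\tfrac38$) are dominated when the four dual-edge contributions within each primal cell are summed, so the local quadratic form in the jumps is positive definite and your fallback surjectivity/scaling argument is unnecessary.
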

\begin{proof}
Given $ {\mu} \in M^h$ define $v\in \mathbb{Q}^1({\mathcal{T}}_h)$ as $v(x_i) = 
\overline{\mu}(x_i)$ if $x_i$ is a vertex of the primal mesh 
in $V_i$ and $v(x_i) = 0$ if  $x_i$ is a vertex of the primal mesh 
on $\partial \Omega_D$. We first 
verify that, 
\begin{equation}\label{eq:vforinfsup}
\vert v  \vert_{H^1}^2 = \Vert v\Vert_{V^h}^2 \asymp  
\Vert  {\mu }\Vert_{M^h}^2.
\end{equation}

It is enough to verify this equivalence of norms in the 
reference square $\hat{R}=[0,1]\times[0,1]$. Denote by $P_i$, 
$i=1,2,3,4$ the values of the reference function $\hat{v}$ at the nodes 
of the reference element. We have, 
\[
\hat{v}    = P_1(1-x)(1-y) + P_2(x)(1-y) + P_3(1-x)y +P_4 x y, \\
\]
and we can directly compute $\partial_x \hat{v}  = (P_2-P_1)(1-y) + (P_4-P_3)y$ 
and $ \partial_y \hat{v} =  (P_3-P_1)(1-x) + (P_4-P_2)x$. Therefore,
after some calculations we obtain 
\begin{align*} (P_2-P_1)^2\frac{1}{6} + (P_4-P_3)^2\frac{1}{6}
  &\leq(P_2-P_1)^2\frac{1}{3} + 
  (P_4-P_3)^2\frac{1}{3} - |(P_2-P_1)(P_4-P_3)|\frac{1}{3} \\
  &\leq \int_{\hat{R}} (\partial_x \hat{v})^2 \\
  & \leq
  (P_2-P_1)^2\frac{1}{3} + 
(P_4-P_3)^2\frac{1}{3} + |(P_2-P_1)(P_4-P_3)|\frac{1}{3}\\
             &\leq  (P_2-P_1)^2\frac{1}{2} + (P_4-P_3)^2\frac{1}{2}.                   
\end{align*}
 Analogously, 
\begin{align*} 
(P_3-P_1)^2\frac{1}{6} 
  + (P_4-P_2)^2\frac{1}{6} \leq \int_{R} (\partial_y
  \hat{v})^2 
             \leq (P_3-P_1)^2\frac{1}{2} + (P_4-P_2)^2\frac{1}{2}.                    
\end{align*}
This prove \eqref{eq:vforinfsup}. Now we verify that 
\begin{equation*}
\int_{\Gamma^*_h}\nabla v \cdot 
\mathbf{n} [ {\mu} ] \succeq \Vert \mu \Vert_{M^h}^2.
\end{equation*}
Observe that if $R$ is an element of the primal triangulation, 
$\Gamma^*_h \cap R$ can be written as the union of four segments denoted by 
$\Gamma^*_{i,R}$ where $i=4(up), 2(left), 3(right), 1(down)$. Working again
on the reference square, we have 
\begin{align*}
\int_{\hat{\Gamma}^*_{1,\hat{R}}}\nabla \hat{v}\cdot \mathbf{n} 
[P_2-P_1] &= (P_2-P_1)\int_0^{1/2}(P_2-P_1)(1-y) +(P_4-P_3)y\\
 &=  (P_2-P_1)^2\frac{3}{8} + (P_2-P_1)(P_4-P_3)\frac{1}{8}.
\end{align*}
Analogously, 
\begin{align*}
\int_{\hat{\Gamma}^*_{2,\hat{R}}}\nabla \hat{v}\cdot \mathbf{n}[P_4-P_3] 
         &=  (P_4-P_3)^2\frac{3}{8} + (P_2-P_1)(P_4-P_3)\frac{1}{8}\\       
\int_{\hat{\Gamma}^*_{3,\hat{R}}}\nabla \hat{v}\cdot \mathbf{n} [P_3-P_1] 
         &=  (P_3-P_1)^2\frac{3}{8} + (P_3-P_1)(P_4-P_2)\frac{1}{8}\\   
\int_{\hat{\Gamma}^*_{4,\hat{R}}}\nabla \hat{v}\cdot \mathbf{n} [P_4-P_2] 
         &=  (P_4-P_2)^2\frac{3}{8} + (P_3-P_1)(P_4-P_2)\frac{1}{8}\\                     
\end{align*}

If we add these last form equations we get 
\begin{align*}
\int_{\Gamma^*_h \cap R}\nabla v \cdot \mathbf{n} [ {\mu} ]  
\succeq (P_2-P_1)^2 +	 (P_3-P_1)^2 + (P_4-P_2)^2 + (P_4-P_3)^2.
\end{align*}
This finish our proof.
\end{proof}

We mention that for quasi-uniform and shape regular meshes, for quadrilateral
$\mathbb{Q}^r({\mathcal{T}}_h)$ or triangular $\mathbb{P}^r({\mathcal{T}}_h)$ finite element spaces, the ``Assumption B''
holds for $p \in H^{r+1}(\Omega)$. For the solution $p$ of problem
(\ref{eq:problem1}) to be in $H^{r+1}(\Omega)$, it is necessary to
impose conditions on the shape and smoothness of domain as well as on the
type of boundary conditions (Dirichlet, Neumann or mixed);
see \cite{MR0775683}. For instance, for the pure homogeneous Dirichlet boundary condition case, it is sufficient that $\Omega$ be convex and
$q \in L^2(\Omega)$ in order that $p \in H^{2}(\Omega)$,
and also ``Assumption C'' follows. 
For  $p$ to be in $H^{r+1}(\Omega)$ for integer $r\leq 2$, it is sufficient
that $\Omega$ be a rectangular domain and $q \in  H^{r-1}(\Omega)$. Higher-order approximation and regularity can also be obtained for curved
isoparametric finite elements on domains with smooth boundaries.

\section{Numerical Experiments}\label{sec:num}

We consider the Dirichlet problem \eqref{eq:problem1} and
employ the meshes depicted in Figure \ref{figure1} 
with a variety of mesh sizes and $\Lambda=I$. We impose conservation 
of mass as described in the paper by using Lagrange 
multipliers. For this paper, we solved the saddle 
point linear system by LU decomposition. Several 
iterative solvers can be proposed for this saddle 
point problem but this will be considered in future 
studies, not here.

Consider  \textbf{$\Omega = [0,1]\times [0,1]$} and $\Lambda=I$. We consider 
a regular mesh made of $2^M \times 2^M$ squares. The dual mesh is 
constructed by joining the centers of the elements of the 
primal mesh. We performed a series of numerical experiments 
to compare properties of FEM solutions with the solution 
of our high order FV formulation (to which we refer from 
now on as FV solution). The FV formulation with correction we denote
by  FV + $\lambda$. 

\subsection{Smooth problem with nonhomogeneous Dirichlet boundary conditions}
We selected the following forcing term and Dirichlet boundary conditions as
\begin{equation*}
\begin{aligned}
q(x,y) &= 2\pi (\cos(\pi x) \sin(\pi y) 
- 3\sin(\pi x) \cos(\pi y) 
+ \pi \sin(\pi x) \sin(\pi y) (-x+3y)), \\
u_D(x,y) &= 1+ x+ 2y,
%p(x,y) &= \sin(\pi x) \sin(\pi y) (-x+3y),
\end{aligned}
\end{equation*}
and see that the exact solution is
\[
p(x,y) = \sin(\pi x) \sin(\pi y) (-x+3y) + 1 +x+2y.
\]
First we implemented the case of $\mathbb{Q}^1({\mathcal{T}}_h)$ elements that 
corresponds to the classical finite element and classical 
finite volume methods. We compute $L^2$ and $H^1$ errors. 
We present the results in Table \ref{TabL22Q1} and displayed 
graphically in Figures \ref{logL22Q1} and \ref{logH12Q1}. We 
observe here optimal convergence of both strategies.

\begin{figure}[ht]
\centering 
\includegraphics[scale=.3, trim = 4cm 0 0 0]{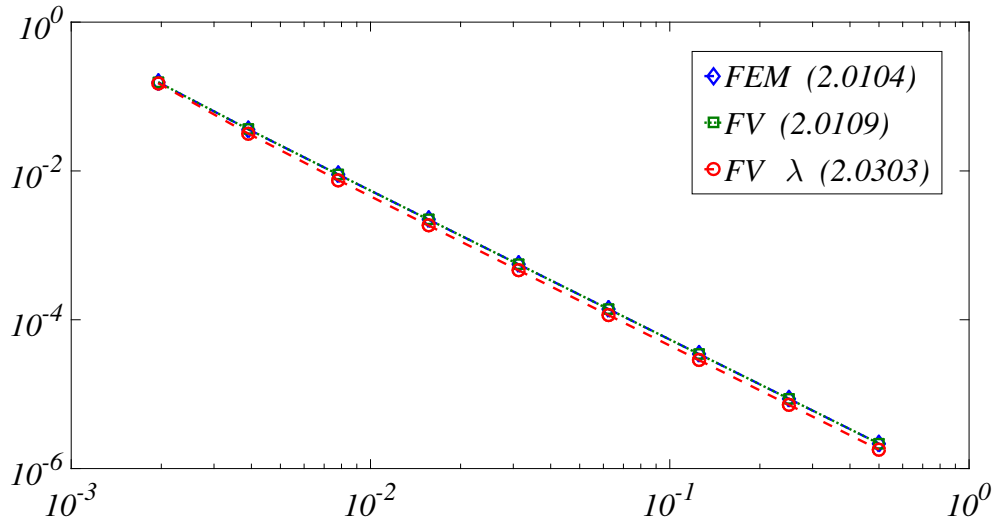}
\caption{Log-log graphic  of \textbf{FEM} and \textbf{FV} 
$L^2$ errors for  numerical solutions of Example 1, using 
$\mathbb{Q}^1({\mathcal{T}}_h)$ discretization, $h = 2^{-M}$, $M=1,\dots, 9$.}
\label{logL22Q1} 
\end{figure}

\begin{figure}[ht]
\centering
\includegraphics[scale=.3, trim = 4cm 0 0 0]{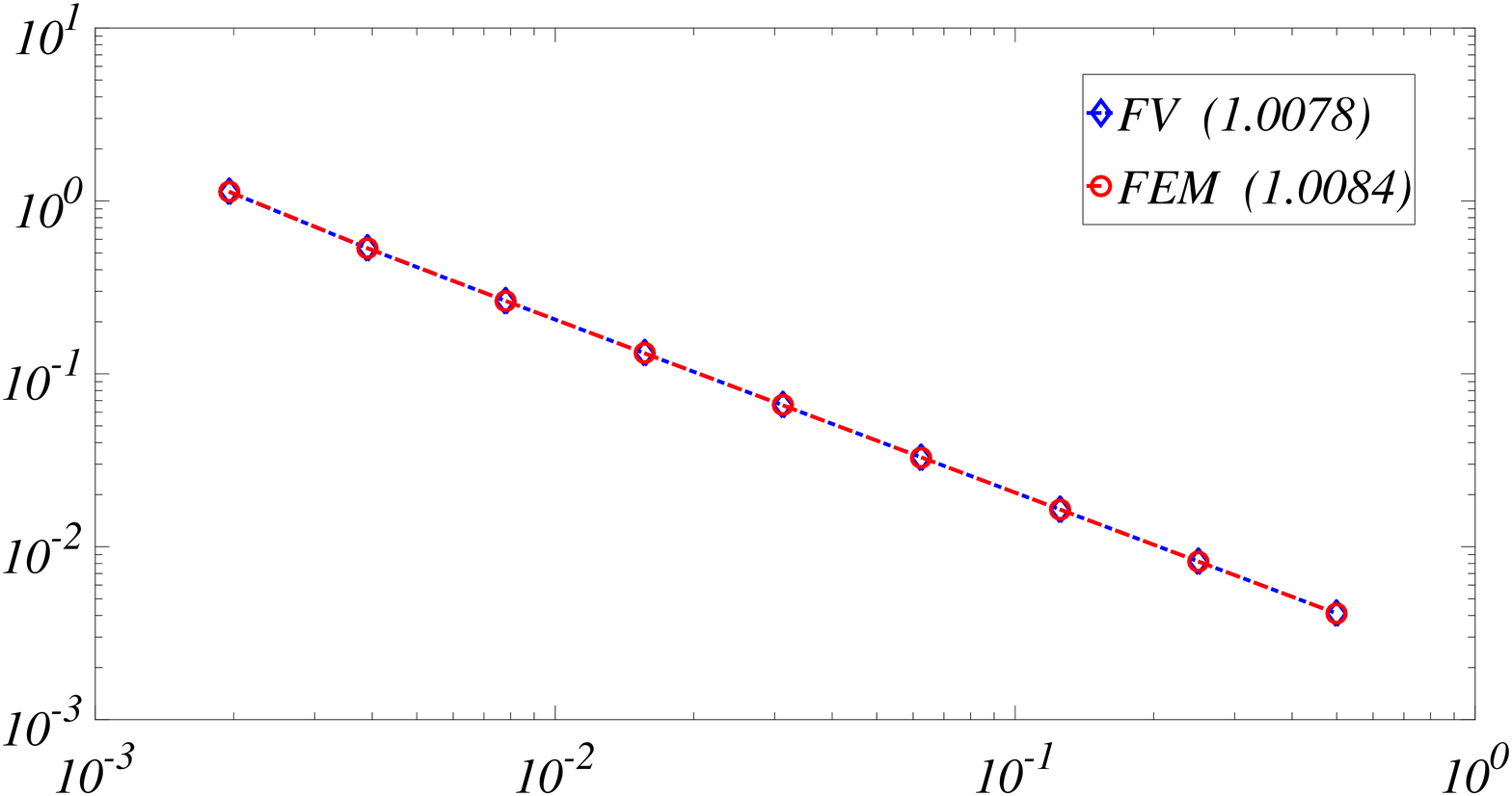} 
\caption{Log-log graphic  of \textbf{FEM} and \textbf{FV} 
$H^1$ errors for  numerical solutions of Example 1, using 
$\mathbb{Q}^1({\mathcal{T}}_h)$ discretization, $h = 2^{-M}$, $M=1,\dots, 9$.}
\label{logH12Q1}
\end{figure}

\begin{table}[ht]
\centering
\renewcommand{\arraystretch}{1.1}
      \begin{tabular}{c|c|c||c|c} 
       	    $M$      & $FEM, \,\, L^2 \, Error$ & $FV+\lambda, \, L^2 \, Error$ & $FEM, \,\, H^1 \, Error$ & $FV,  \, H^1 \, Error$\\ 
			\hline  $1$  &  $1.5538\times 10^{-1}$  &  $1.5103\times 10^{-1}$ &  $1.1297\times 10^{0}$  &  $1.1338\times 10^{0}$ \\
			\hline  $2$  &  $3.6342\times 10^{-2}$  &  $3.1881\times 10^{-2}$ &  $5.3226\times 10^{-1}$  &  $5.3416\times 10^{-1}$ \\
			\hline  $3$  &  $8.9720\times 10^{-3}$  &  $7.5.276\times 10^{-3}$ & $2.6374\times 10^{-1}$  &  $2.6403\times 10^{-1}$ \\
		 	\hline  $4$  &  $2.2548\times 10^{-3}$  &  $1.9348\times 10^{-3}$  &  $1.3163\times 10^{-1}$  &  $1.3172\times 10^{-1}$\\
		 	\hline  $5$  &  $5.5513\times 10^{-4}$  &  $4.6095\times 10^{-4}$  &  $6.5833\times 10^{-2}$  &  $6.5840\times 10^{-2}$ \\
            \hline  $6$  &  $1.3875\times 10^{-4}$  &  $1.1513\times 10^{-4}$  &  $3.2948\times 10^{-2}$  &  $3.2924\times 10^{-2}$\\
            \hline  $7$  &  $3.4685\times 10^{-5}$  &  $2.8776\times 10^{-5}$  &  $1.6418\times 10^{-2}$  &  $1.6489\times 10^{-2}$\\
            \hline  $8$  &  $8.6711\times 10^{-6}$  &  $7.1935\times 10^{-6}$  &  $8.2838\times 10^{-3}$  &  $8.2141\times 10^{-3}$\\
            \hline  $9$  &  $2.1678\times 10^{-6}$  &  $1.7983\times 10^{-6}$  &  $4.1639\times 10^{-3}$  &  $4.1857\times 10^{-3}$\\
        \end{tabular}
\caption{Table  of \textbf{FEM} and \textbf{FV} $L^2$ and 
$H^1$ errors for  numerical solutions of Example 1, using 
$\mathbb{Q}^1({\mathcal{T}}_h)$ discretization, calculated  over $9$ different values 
of mesh norm, $h = 2^{-M}$.}
\label{TabL22Q1}
\end{table}

We now consider the case of $\mathbb{Q}^2({\mathcal{T}}_h)$ finite element space. 
We have computed the   FEM solution as well 
as the solution of the saddle point system \eqref{cela}. 
We call this last solution the High order FV solution.
We estimate the $L^2$ and $H^1$ errors for both 
{FEM} and {FV} and compare the results 
through the log-log graphics shown in Figure \ref{logL22} 
and Figure \ref{logH12}. See also the  
Table \ref{TabL22} for comparisons. Numerical 
convergence is observed with a rate of $2$ for the $H^1$ 
error.  The error $p-p^h$ is not optimal in $L^2$. For 
this error, the observed convergence rate is close to 
$2$ but if we observe the error $p-(p^h+\lambda^h)$ 
in $L^2$ we estimate a convergence rate of $3$. 
{ These results coincide with our theoretical 
predictions four our High order FV formulation.}

\begin{figure}[ht]
\centering 
\includegraphics[scale=.3, trim = 4cm 0 0 0]{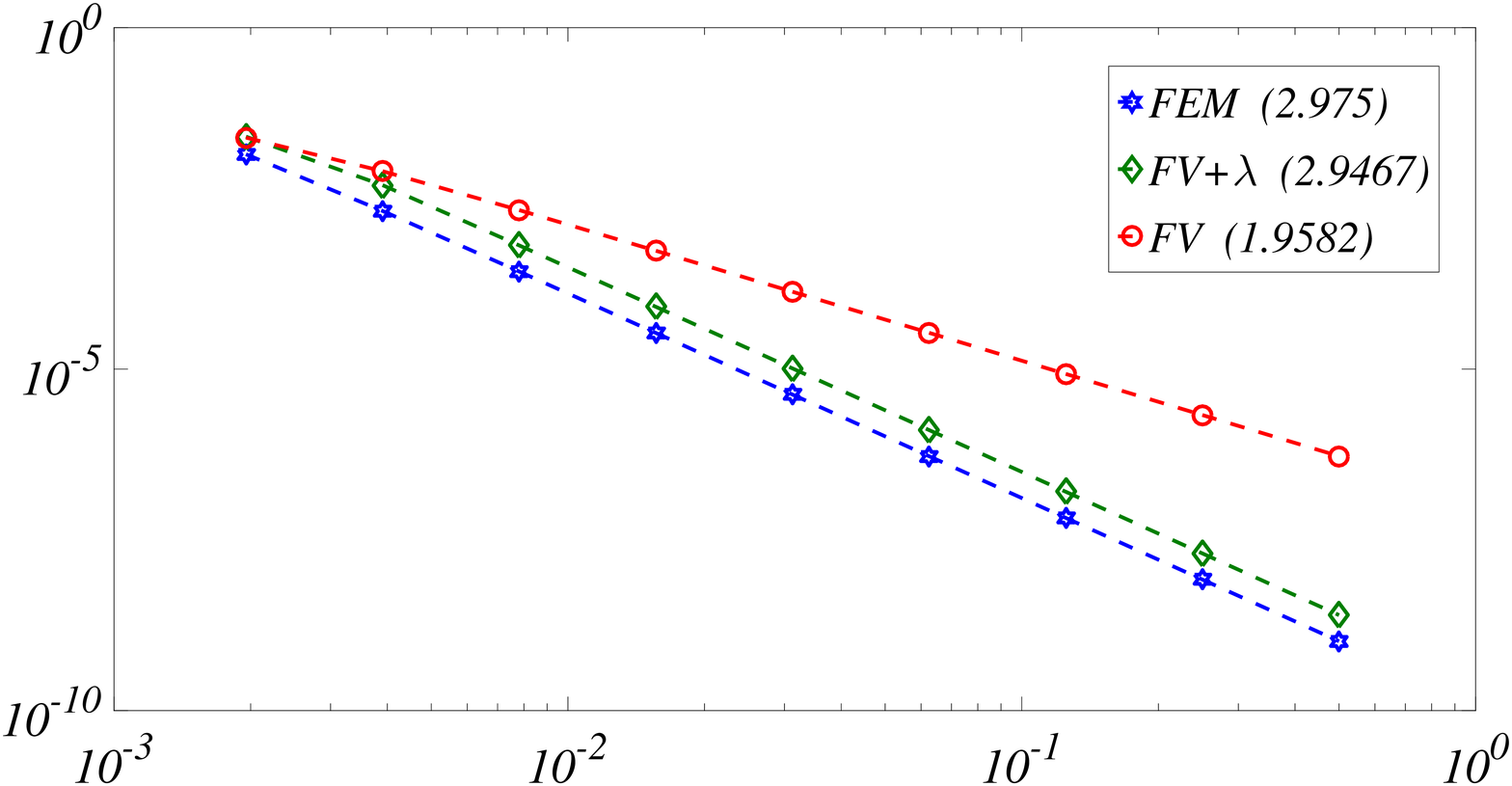}
\caption{Log-log graphic  of \textbf{FEM} and \textbf{FV} 
$L^2$ errors for  numerical solutions of Example 1, using 
$\mathbb{Q}^2({\mathcal{T}}_h)$ discretization,  $h = 2^{-M}$, $M=1,\dots, 9$.}
\label{logL22} 
\end{figure}
 
\newpage 
 
\begin{figure}[ht]
\centering
\includegraphics[scale=.3, trim = 4cm 0 0 0]{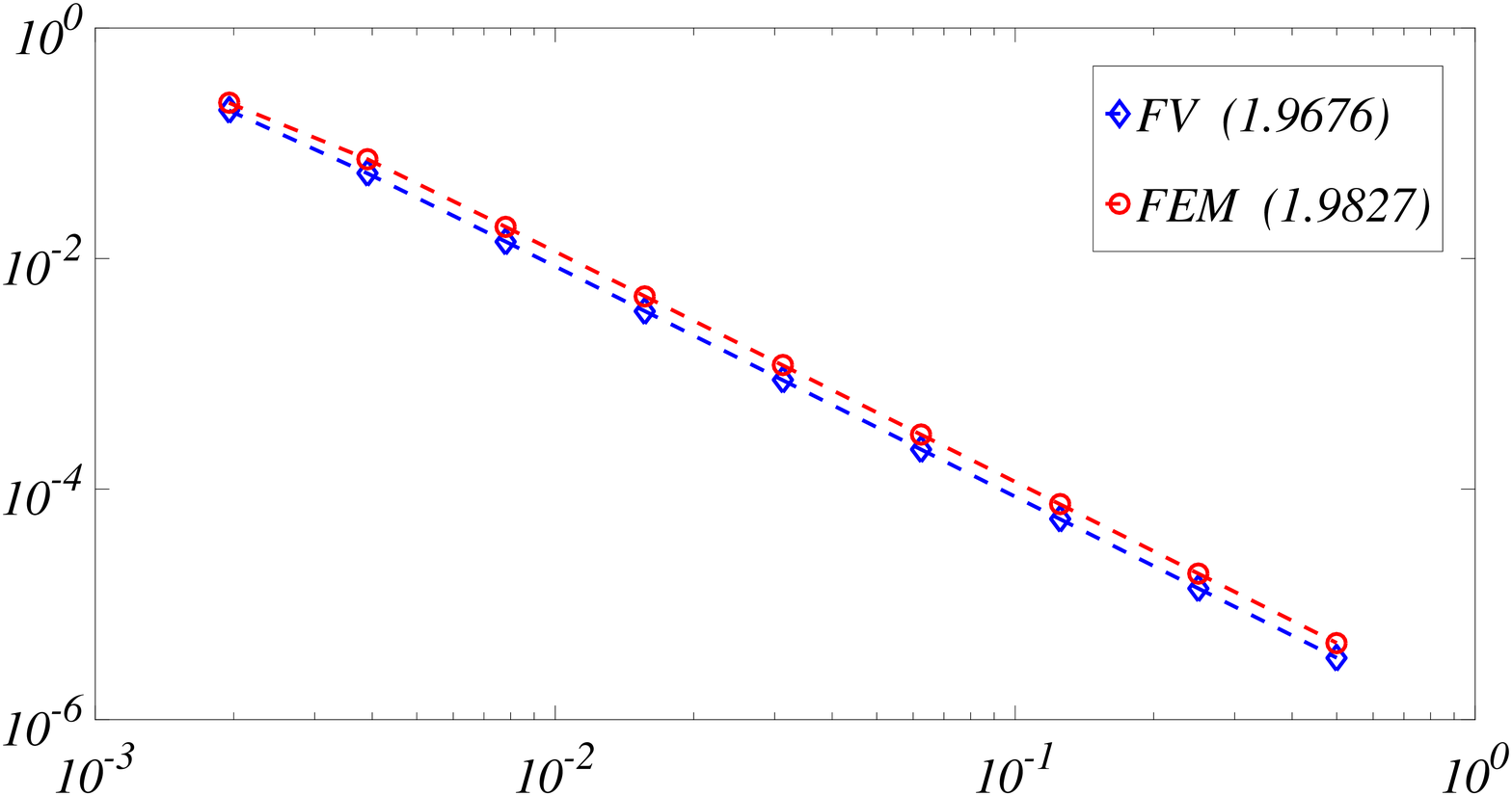} 
\caption{Log-log graphic  of \textbf{FEM} and \textbf{FV} $H^1$ 
errors for  numerical solutions of Example 1, using $\mathbb{Q}^2({\mathcal{T}}_h)$ 
discretization, $h = 2^{-M}$, $M=1,\dots, 9$.}
\label{logH12}
\end{figure}

\begin{table}[ht]
\centering
%\begin{minipage}[t]{.3\textwidth}
\centering
\renewcommand{\arraystretch}{1.1}
     	\begin{tabular}{c|c|c||c|c} 
       	    $M$      & $FEM \, L^2 \, Error$ & $FV+\lambda, \, L^2 \, Error$ & $FEM \, H^1 \, Error$ & $FV, \, H^1\,  Error$ \\
			\hline  $1$  &  $1.4061\times 10^{-2}$  &  $2.5448\times 10^{-2}$ &  $1.9302\times 10^{-1}$  &  $2.2436\times 10^{-1}$ \\
			\hline  $2$  &  $2.1217\times 10^{-3}$  &  $4.9023\times 10^{-3}$ &  $5.4862\times 10^{-2}$  &  $7.2895\times 10^{-2}$ \\
			\hline $3$   &  $2.6860\times 10^{-4}$  &  $6.4789\times 10^{-4}$ &  $1.4072\times 10^{-2}$  &  $1.8847\times 10^{-2}$  \\
		 	\hline  $4$  &  $3.3875\times 10^{-5}$  &  $8.1756\times 10^{-5}$ &  $3.5418\times 10^{-3}$  &  $4.7552\times 10^{-3}$ \\
		 	\hline  $5$  &  $4.2437\times 10^{-6}$  &  $1.0242\times 10^{-5}$ &  $8.3539\times 10^{-4}$  &  $1.2667\times 10^{-3}$  \\
            \hline  $6$  &  $5.3075\times 10^{-7}$  &  $1.2810\times 10^{-6}$ &  $2.2016\times 10^{-4}$  &  $2.9616\times 10^{-4}$ \\
            \hline  $7$  &  $6.6353\times 10^{-8}$  &  $1.6015\times 10^{-7}$ &  $5.5043\times 10^{-5}$  &  $7.4046\times 10^{-5}$ \\
            \hline  $8$  &  $8.2944\times 10^{-9}$  &  $2.0019\times 10^{-8}$ &  $1.3761\times 10^{-5}$  &  $1.8512\times 10^{-5}$ \\
            \hline  $9$  &  $1.0369\times 10^{-9}$  &  $2.5024\times 10^{-9}$ &  $3.4403\times 10^{-6}$  &  $4.6280\times 10^{-6}$ \\
	    \end{tabular}
%\end{minipage}
\caption{Table  of \textbf{FEM} and \textbf{FV} $L^2$ and 
$H^1$ errors for  numerical solutions of Example 1, using 
$\mathbb{Q}^2({\mathcal{T}}_h)$ discretization, calculated  over $9$ different 
values of mesh norm, $h = 2^{-M}$.} 
\label{TabL22}
\end{table}

We now turn our attention to the norm $\| \cdot \|_{V^h}$, 
defined in \eqref{eq:norm-for-P2}, of the computed error. 
We introduce the  seminorm,
 \begin{equation}\label{def:seminormVh}
| p |_{V^h}^2 = \sum_{\ell=1}^{N_h} \left(
    \|p_{xx}\|_{L^2{(R_\ell)}}^2 +   \|p_{yy}\|_{L^2{(R_\ell)}}^2 \right)
\end{equation} 
Note that $\| p \|_{V^h}^2= |p|_{H^1}^2+h^2| p |_{V^h}^2$. 
We present the results in Table \ref{TabLapL2Q2}. We see 
from this results that the error in the seminorm 
$| \cdot |_{V^h}$ decays linearly with $h$ and recall that this seminorm is scaled by a factor $h$ in the definition 
of the extended norm $\| \cdot \|_{V^h}$ in \eqref{eq:norm-for-P2}.

\begin{table}[ht]
\centering
%\begin{minipage}[t]{.3\textwidth}
\centering
\renewcommand{\arraystretch}{1.1}
     	\begin{tabular}{c|c} 
       	    $M$      & $| p-p^h |_{V^h}$  \\
			\hline  $1$  & $ 3.6040\times 10^{0} $\\
			\hline  $2$  &   $1.8127\times 10^{0}$\\
			\hline  $3$  &  $  9.0885\times 10^{-1}$\\
			\hline  $4$  &  $ 4.5506\times 10^{-1}$\\
			\hline  $5$  &  $ 2.2769\times 10^{-1}$\\
			\hline  $6$  &   $1.1388\times 10^{-1}$\\
			\hline  $7$  &   $5.6954\times 10^{-3}$\\
			\hline  $8$  &   $2.8480\times 10^{-3}$\\
			\hline  $9$  &  $ 1.4240\times 10^{-3}$\\
	    \end{tabular}
%\end{minipage}
\caption{Table of scaled seminorm errors, see 
\eqref{def:seminormVh}, for \textbf{FV} solution,  $h = 2^{-M}$.
Recall that the seminorm $| \cdot |_{V^h}$  in \eqref{def:seminormVh} is scaled by a factor $h$ in the definition 
of the extended norm \eqref{eq:norm-for-P2}}. 
\label{TabLapL2Q2}
\end{table}

Using our high order formulation we compute the conservative 
approximation of the pressure and a Lagrange multiplier which 
is used to correct the solution for a improved $L^2$ 
approximation. Note that the exact solution value  of the 
Lagrange multiplier is $\lambda=0$. We now compute the error 
in the Lagrange multiplier approximation in the $M_h$ norm. The results are 
presented in Table \ref{TabMULQ2}. We observe a convergence 
of order $2$ in the approximation of the Lagrange multiplier. 

\begin{table}[ht]
\centering
%\begin{minipage}[t]{.3\textwidth}
\centering
\renewcommand{\arraystretch}{1.1}
     	\begin{tabular}{c|c} 
       	    $M$      & Error  \\
			\hline  $1$  &  $2.4825 \times 10^{-1} $\\
			\hline  $2$  &  $9.9023\times 10^{-2}$\\
			\hline  $3$  &  $ 2.5293\times 10^{-2}$\\
			\hline  $4$  &  $ 6.3369\times 10^{-3}$\\
			\hline  $5$  &  $ 1.5848\times 10^{-3}$\\
			\hline  $6$  &  $ 3.9623\times 10^{-4}$\\
			\hline  $7$  &  $ 9.9061\times 10^{-5}$\\
			\hline  $8$  &  $2.4765\times 10^{-5}$\\
			\hline  $9$  &  $ 6.1913\times 10^{-5}$\\
	    \end{tabular}
%\end{minipage}
        \caption{Table of error values $\|\lambda_h-\lambda\|_{M_h}$ for the Lagrange multiplier approximation.} 
\label{TabMULQ2}
\end{table}

To finish this subsection we compute energy and conservation 
of mass indicators in Table \ref{tab:conservationindicator}.
The energy is defined as\begin{equation}
E(p) = \frac{1}{2}\int_\Omega  
\vert \nabla p\vert^2 dx - \int_\Omega qp 
\end{equation}
while the conservation of mass indicator is given by, 
\begin{equation}
J(p) = \left(\sum_{R} \left(\int_{\partial R}  -
\nabla p \cdot \mathbf{n}  - \int_{R} q \right)^2\right)^{1/2}.
\end{equation}

\begin{table}[ht]
\centering
%\begin{minipage}[t]{.3\textwidth}
\centering
\renewcommand{\arraystretch}{1.1}
     	\begin{tabular}{c|c|c||c|c} 
       	    $M$      &  $\mathbb{Q}^1, E(u_{FEM})$   &  $\mathbb{Q}^2, E(u_{FEM})$ &  $\mathbb{Q}^1, E(u_{FV})$   &  $\mathbb{Q}^2,  E(u_{FV})$    \\
			\hline  $1$  &  -4.5230278474  &  -4.523568683883 &  -4.5230278425  &  -4.5233568683864 \\ \hline
       	    $M$      &  $\mathbb{Q}^1, J(u_{FEM})$   &  $\mathbb{Q}^2, J(u_{FEM})$ &  $\mathbb{Q}^1, J(u_{FV})$   &  $\mathbb{Q}^2,  J(u_{FV})$    \\
			\hline  $1$  &  $5.2434\times 10^{-6}$  &  $8.2205\times 10^{-8}$ &  $2.2928\times 10^{-14}$  &  $1.0261\times 10^{-13}$ \\
			\end{tabular}
\caption{Energy minimization and conservation indicator with $h = 2^{-9}$.} 
\label{tab:conservationindicator}
\end{table}

\subsection{Singular forcing with nonhomogeneous Neumann boundary condition}

For comparison, we also solve two problems with Neumann 
boundary conditions. The first problem has a singular 
forcing term in the form of a font located at $(0,0)$ 
and a source located in $(1,1)$. The computed solution 
for this problem is shown in the Figure \ref{fig:fivespot}. 
The second problem has a smooth forcing term.

Table \ref{TabH12} shows $FEM$ and $FV$ computed order of 
convergence of the error. Apart from computing $L^1$ and 
$L^2$ norms of the error we also include the measure of 
the error in the seminorm $W^{1,1}$ (note that in this 
case the solution of this  problems in not regular and is not
in $H^1(\Omega)$). We 
observe here that, in terms of approximation, the performance 
of both strategies FEM and FV perform similarly with respect 
to the order of the polynomials. The main difference between 
the two computed solution is only the conservation of 
mass that is being satisfied only by the FV solution.

\begin{figure}[ht]
\centering
\includegraphics[scale=.2]{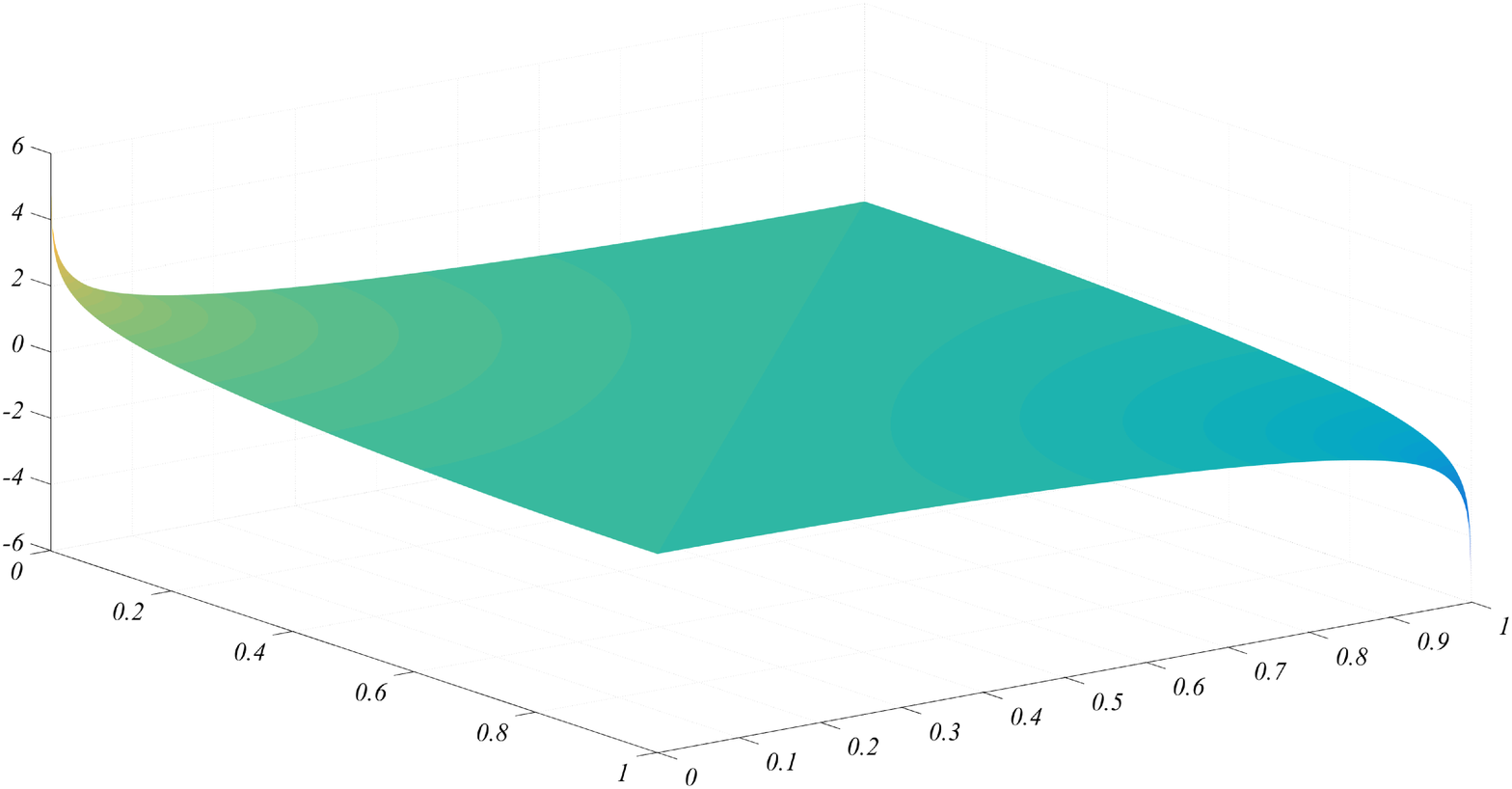} 
\caption{Plot of numerical solution for the problem 
with homogeneous Neumann boundary conditions and
singular right hand side.}\label{fig:fivespot}\end{figure}

\begin{table}[ht]
\centering
%\begin{minipage}[t]{.3\textwidth}
%\centering
\renewcommand{\arraystretch}{1.1}
     	\begin{tabular}{c|c|c|c} 
       	     $FEM$  &            & $\mathbb{Q}^1$          & $ \mathbb{Q}^2$\\ 
			\hline  &    $L^1$          &  $1.8463$  								 &  $1.8707$  \\
			\hline  &    $L^2$          &  $1.0000$                                  &  $1.0121$  \\
			\hline  &    $W^{1,1}$     &  $0.8694$  								 &  $0.9983$  \\
			\hhline{|=|=|=|=|}
       	$FV$  &             &           &  \\ 
			\hline $FV+\lambda$ &    $L^1$          &  $1.8490$  							    &  $1.8715$  \\
			\hline $FV+\lambda$ &    $L^2$          &  $1.0000$  								&  $1.0000$  \\
			\hline  &    $W^{1,1}$     &  $0.8590$  								&  $0.9977$  \\			
	    \end{tabular}
%\end{minipage}
\caption{Values of $L^1$, $L^2$  and $W^{1,1}$  error order 
of $FEM$ and $FV$ for the homogeneous Neumann boundary 
condition problem with singular forcing.} 
\label{TabH12}
\end{table}

\subsubsection{Smooth forcing}

To finish our comparison with Neumann boundary condition 
we consider the case where the flux term is given by 
$q(x,y) = x - y$. In Table \ref{Tab:Regular} we show the 
results. We obtain expected results with our FV formulation 
being as accurate as the FEM formulation and still 
satisfying the conservation of mass restrictions. 

 \begin{table}[ht]
%\vspace{1cm}
\centering
\centering
\renewcommand{\arraystretch}{1.1}
     	\begin{tabular}{c|c|c|c} 
       	     $FEM$  &           & $\mathcal{Q}^1$          & $\mathcal{Q}^2$\\ 
			\hline  &    $L^1$          &  $1.9999$  								 &  $3.0000$  \\
			\hline  &    $L^2$          &  $1.9999$                                  &  $3.0000$  \\
			\hline  &    $W^{1,1}$     &  $1.0000$  								 &  $2.0000$  \\
			\hhline{|=|=|=|=|}
       	$FV$  &             &           &  \\ 
			\hline $FV+\lambda$ &    $L^1$          &  $2.0000$  							    &  $3.0000$  \\
			\hline $FV+\lambda$ &    $L^2$          &  $1.9999$  								&  $3.0000$  \\
			\hline  &    $W^{1,1}$     &  $1.0000$  								&  $2.0000$  \\			
	    \end{tabular}
\caption{Values  of  $L^1$, $L^2$  and $W^{1,1}$  error order of 
$FEM$ and $FV$ for the homogeneous Neumann boundary condition 
problem with smooth forcing.} 
\label{Tab:Regular}
\end{table}

  \section{Conclusions}\label{sec:conclusions}

  In this paper, we introduce a high-order discretization with locally conservative properties for a second-order problem. Our formulation discretizes the second order problem and there is no need to write an equivalent first order system of  differential equations. It is, therefore, a novel approach and it is fundamentally different from classical mixed finite element methods such as discretizing by Raviart-Thomas elements. We impose the conservative constraints by using a Lagrange multiplier for each control volume and therefore we can compute locally conservative solutions while keeping the high-order approximation. For the case of constant permeability coefficient, we present the analysis of our formulation at the continuous and discrete levels. In particular, we obtain optimal estimates for the $H^1$ and $L^2$ norms. We mention also that the optimal $L^2$ approximation is obtained without any post-processing or hybridization which are other differences with classical mixed finite element methods. The analysis can be straightforwardly extended to the case of smooth permeability coefficients.

We present numerical experiments that verify our theoretical findings. We also stress the fact that our approximation of the solution has continuous tangential fluxes along primal element edges. The implementation of our method is simple and requires only coding tools used for classical conforming high-order finite element method plus the computation of fluxes of basis functions along control volumes boundaries (as in the classical low-order finite volume method).

Our formulation can be easily extended to a variety of cases where both high-order approximation and also conservative properties are desirable. For instance, we mention the case of flow problems in high-contrast multiscale porous media with sophisticated high-order discretization schemes, see \cite{MR3430146}. We note that the analysis for this case and for other high-order approximation spaces is non-trivial as well as robust solvers are under investigation.

\vspace{3mm}
\noindent {\textcolor{black}{\bfseries Acknowledgement:} \\
Eduardo Abreu thanks in part by FAPESP 2016/23374-1 and 
CNPq Universal 445758/2014-7. Ciro Diaz thanks CAPES for 
a graduate fellowship. Marcus Sarkis thanks in part by 
NSF-MRI 1337943 and NSF-MPS 1522663.}

%\section*{References}
%\bibliographystyle{plain}
\bibliographystyle{elsarticle-num}
\bibliography{references}

\end{document}